\newtheorem{thr}{Theorem}[section]
\newtheorem{prop}[thr]{Proposition}
\newtheorem{conj}[thr]{Conjecture}
\theoremstyle{definition}
\newtheorem*{defi*}{Definition}
\newtheorem{claim}[thr]{Claim}
\def\G{\mathcal{G}}
\def\H{\mathcal{H}}
\newcommand*{\myproofname}{Proof}
\newenvironment{claimproof}[1][\myproofname]{\begin{proof}[#1]}{\end{proof}}
\newcommand*{\abs}[1]{\left\lvert #1\right\rvert}
\def\ct{\tilde{c}}
\newcommand{\ex}{ex}
\title{The maximum number of connected sets in regular graphs}
\date{}
\author{
Stijn Cambie \thanks{Department of Computer Science, KU Leuven Campus Kulak-Kortrijk, 8500 Kortrijk, Belgium. 
\protect\href{mailto:stijn.cambie@hotmail.com}{\protect\nolinkurl{stijn.cambie@hotmail.com}},
\protect\href{mailto:jan.goedgebeur@kuleuven.be}{\protect\nolinkurl{jan.goedgebeur@kuleuven.be}} and \protect\href{mailto:jorik.jooken@kuleuven.be}{\protect\nolinkurl{jorik.jooken@kuleuven.be}}}
\and Jan Goedgebeur
 \footnotemark[1] \thanks{Department of Applied Mathematics, Computer Science and Statistics, Ghent University, 9000 Ghent, Belgium.}
\and Jorik Jooken
 \footnotemark[1]
}
\begin{document}
\maketitle
\begin{abstract}
    We improve the best known lower bounds on the exponential behavior of the maximum of the number of connected sets, $N(G)$, and dominating connected sets, $N_{dom}(G)$, for regular graphs. These lower bounds are improved by constructing a family of graphs defined in terms of a small base graph (a Moore graph), using a combinatorial reduction of these graphs to rectangular boards followed by using linear algebra to show that the lower bound is related to the largest eigenvalue of a coefficient matrix associated with the base graph. We also determine the exact maxima of $N(G)$ and $N_{dom}(G)$ for cubic and quartic graphs of small order. 
    We give multiple results in favor of a conjecture that each Moore graph $M$ maximizes the base indicating the exponential behavior of the number of connected vertex subsets among graphs with at least $\abs M$ vertices and the same regularity.
    We improve the best known upper bounds for $N(G)$ and $N_{dom}(G)$ conditional on this conjecture.
\end{abstract}

\section{Introduction}

Connectedness of graphs is a fundamental property related to the possibility of interaction and transmission, and as such one can expect that the robustness of a network is related to the number of connected sets.
There are also relations between e.g. the number of connected sets and the average size of a connected set (\cite[Thr.~3.2]{John22b}).
The number of connected (vertex) sets in a graph, is related to upper bounds for the time complexity of certain hard algorithmic questions for which no subexponential time algorithm is known at this point. 
Some examples of such instances are the travelling salesman problem, computing the chromatic - or Tutte polynomial, finding a maximum internal spanning tree, optimal Bayesian network, Hamiltonian path or - cycle.

An induced subgraph of $G=(V,E)$ which is induced by a subset of vertices $V'$ of $V$ is denoted by $G[V']=\left(V',E\cap \binom{V'}{2}\right).$
The disjoint union of two graphs $G=(V,E)$ and $H=(V',E')$, denoted by $G+H$, is the graph defined by $(V \cup V', E \cup E').$ We will also write $2G$ for $G+G.$
The edge union of $G=(V,E)$ and $H=(V',E')$, where $V'\subset V$, denoted by $G \cup H$, equals $(V, E \cup E').$ In this paper, $n$ always denotes the order of a graph.

For a graph $G$, $i(G)$, $N(G)$, and $N_{dom}(G)$ denote, respectively, the number of independent sets, the number of vertex subsets $V' \subseteq V(G)$ for which $G[V']$ is connected, and the number of vertex subsets $V' \subseteq V(G)$ for which $V'$ is a dominating set in $G$ and $G[V']$ is connected. We will also refer to such vertex sets as \textit{connected sets} and \textit{connected dominating sets}. For a graph $G$ of order $n$, let $c(G)= \sqrt[n]{N(G)}$ and $\ct(G)= \sqrt[n]{N_{dom}(G)}$. 
Let $\G_{d,g}$ be the set of $d$-regular graphs with girth at least $g$ and $\G_{d}=\G_{d,3}$. Let $\G_{d,g}(n) \subset \G_{d,g}$ and $\G_{d}(n) \subset \G_{d}$ be the subsets restricted to the graphs of order $n$.
Let $c_{d,g}(n)=\max_{G \in \G_{d,g}(n)} c(G)$ and $c_{d}(n) =c_{d,3}(n)$. Let $\ct_{d,g}(n)=\max_{G \in \G_{d,g}(n)} \ct(G)$ and $\ct_{d}(n) = \ct_{d,3}(n)$. Let $c_d= \limsup_n c_d(n)$ and $\ct_d= \limsup_n \ct_d(n).$

We also use some Landau notation.
The statement $f(x)=O(g(x))$ implies that there exist fixed constants $x_0, M>0$, such that for all $x \ge x_0$ we have $\lvert f(x) \rvert \le M \lvert g(x) \rvert .$
We write $f(x)=\Theta(g(x))$ if $g(x)=O(f(x))$ and $f(x)=O(g(x))$.
When $\lim_{ x \to \infty} \frac{ f(x)}{g(x)}=0$, we write $f(x)=o(g(x)).$

Determining the number of connected sets $N(G)$ in a graph $G$, is a hard task. E.g. even for elementary constructions such as the grid $P_n \times P_n$ and the cube $Q_n=K_2^n$ no general formulas are known, see~\cite{OEISA059525, OEISA290758} and~\cite[Ques.~2,3]{Vince20}. 
In this paper, we study the maximum number of connected sets in $d$-regular graphs. A maximum degree condition is natural to avoid too large (and trivial) bounds attained by the complete graph $K_n$ or graphs containing the star $S_n$ as a subgraph.
Since the number of connected sets increases by edge addition, the extremal graphs are edge-maximal (under the maximum degree condition) and hence (almost) regular. It is natural to believe that the maximum is attained by regular graphs whenever they exist (i.e., whenever $2 \mid dn$). The main parameter which we will study in this paper is $c_d$, the base of the exponential behavior of the maximum possible number of connected sets in $d$-regular graphs.\footnote{Similar to~\cite{Bjorklund12, KKK18}, but in contrast to~\cite{John22b,Vince20} who normalized with a factor $\frac 12$.}
It turns out that $c_d$ was well-estimated experimentally in~\cite{Perrier08} for $d \in \{3,4,5\}$. 
Bj\"orklund et al.~\cite{Bjorklund12} and Kangas et al.~\cite{KKK18} determined upper bounds using the product theorem of~\cite{CGFS86}, by considering the first and second neighborhood around every vertex.
In particular, this implies that for every fixed $d$, $c_d<2$.
Lower bounds were determined by Kangas et al.~\cite{KKK18} using a modified version of the ladder graph, i.e. chaining certain subgraphs (called gadgets), as depicted in~\cref{fig:gen_laddergraphs}.
Later, the study of the number of connected sets in a graph was reinitiated by Vince~\cite{Vince20} and Haslegrave~\cite{John22b}, motivated by the study of the average size of a connected induced subgraph which originated from the work of~\cite{jamison_average_1983} and~\cite{KMO18}.
As such Haslegrave's question from~\cite{John22b} if the criss-cross prism (depicted in Fig.~\ref{fig:gen_laddergraphs}) is asymptotically optimal (i.e., $c_3=\sqrt[4]{7}$) was already addressed in earlier work. 

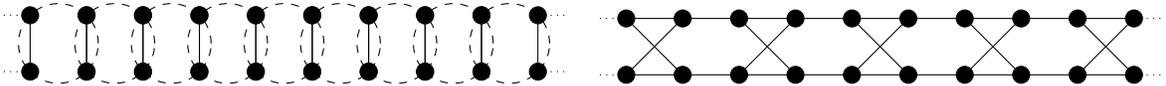
\begin{figure}[h]
    \centering
    \begin{tikzpicture}[scale=0.75]
    \draw[dotted] (0,0)--(-0.5,0);
\draw[dotted] (0,1)--(-0.5,1);
\draw[dotted] (9,0)--(9.5,0);
\draw[dotted] (9,1)--(9.5,1);
    \foreach \x in {0,1,...,8}{
        \draw[dashed] (\x+0.5,0.5) circle [radius=0.707]; 
         \draw[fill] (\x,0) circle (0.15);
        \draw[fill] (\x,1) circle (0.15);
        \draw[fill] (\x+1,0) circle (0.15);
        \draw[fill] (\x+1,1) circle (0.15);
        \draw (\x+1,0)--(\x+1,1);
        \draw (\x,0)--(\x,1);
    }
    \end{tikzpicture}\quad
    \begin{tikzpicture}[scale=0.75]

\draw[dotted] (0,0)--(-0.5,0);
\draw[dotted] (0,1)--(-0.5,1);
\draw[dotted] (9,0)--(9.5,0);
\draw[dotted] (9,1)--(9.5,1);
\draw (0,0)--(9,0);
\draw (0,1)--(9,1);
\foreach \x in {0,1,...,9}{
\draw[fill] (\x,0) circle (0.15);
\draw[fill] (\x,1) circle (0.15);
}
\foreach \x in {0,2,4,6,8}{
\draw (\x,0)--(\x+1,1);
\draw (\x,1)--(\x+1,0);
}

    \end{tikzpicture}    
    \caption{The generalized ladder and the criss-cross prism}
    \label{fig:gen_laddergraphs}
\end{figure}

We improve the lower bounds on $c_d$ by considering constructions which are very different from the chained gadgets (modified ladders).
We do so by connecting base graphs along a cycle, resulting in a few structured families which can be analyzed by considering the main term of $N(G)$ after a combinatorial reduction towards rectangular boards.
A connected subgraph of our graph, will now correspond with a so-called tile, a connected subset of cells of the board.
Here we can approximate and count the connected (defined later) subsets, called tiles, using a recursion. The solution of the recursion gives rise to a linear combination of exponential functions, the main term being related to the largest eigenvalue of the related coefficient matrix.
Our new constructions depend on smaller ``gadgets''/ base graphs than the ones used in~\cite{KKK18}, while improving the constants.
For small orders, we also compute\footnote{We refer the interested reader to~\cref{sec:app1} for more details on the computer programs. The code and data related to this paper have been made publicly available at~\cite{CGJ23}.} the extremal graphs, which turn out to be mostly well-structured and known graphs.
Nevertheless, in the general case, we tend to believe that the extremal graphs cannot be described exactly.
Despite that, we propose two conjectures.

The first one is an analogue of~\cite[Conj.~17]{CdJdVK23}. See~\cite{CD22} and references therein for the main related problem. 
It essentially says that the complement graph needs to have as many as possible cliques $K_{n-d}$. Let $i(G)$, $N(G)$ and $N_{dom}(G)$ denote respectively the number of independent sets, the number of connected sets and the number of dominating connected sets of the graph $G.$

\begin{conj}\label{conj:maxgiven_nd}
    Given $d$ and $d+1 \le n \le 2d.$ Among all $d$-regular graphs with order $n$, for each of the quantities $i(G)$, $N(G)$ and $N_{dom}(G)$, the graph maximizing the quantity is the complement of one or more copies of $K_{n-d}$ and a graph $H$, where $\abs H <2(n-d).$ Here $H$ can be different depending on the quantity.
\end{conj}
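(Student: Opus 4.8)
The plan is to pass to the complement. Set $k=n-1-d$; since $d+1\le n\le 2d$ we have $0\le k\le d-1$ and $n-d=k+1$, so complementation is a bijection from $\G_d(n)$ onto the family $\G_k(n)$ of \emph{sparse} regular graphs. For $i(G)$ this is decisive: a set $S\subseteq V(G)$ is independent in $G$ exactly when it induces a complete subgraph of $\overline{G}$, so $i(G)$ equals the number of complete subgraphs of $\overline{G}$ (with $\emptyset$ and the singletons counted), and maximising $i$ over $\G_d(n)$ amounts to maximising the number of complete subgraphs over $\G_k(n)$. Here one invokes the known solution of the problem of maximising the number of complete subgraphs in a graph of maximum degree at most $k$ (Cutler--Radcliffe): for $n$-vertex graphs this maximum is attained by a disjoint union of copies of $K_{k+1}$ whenever $(k+1)\mid n$; such a union lies in $\G_k(n)$, so it is optimal there as well, and its complement is the complete multipartite graph with all parts of size $n-d$, which is exactly the graph of the conjecture. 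Hence for $i(G)$ the case $(k+1)\mid n$ reduces to Cutler--Radcliffe, and the remaining work concerns $(k+1)\nmid n$: one must show it is optimal to keep all but one of the cliques $K_{k+1}$ intact and to merge the leftover $n\bmod(k+1)$ vertices together with one further clique's worth into a single connected $k$-regular component $H$, which then automatically satisfies $\abs{H}<2(k+1)=2(n-d)$.

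For $N(G)$ and $N_{dom}(G)$ there is no such clean duality, so I would argue directly, exploiting that $d+1\le n\le 2d$ makes $G$ very dense: each vertex misses at most $d-1$ others, $G$ has diameter at most $2$, and ``most'' subsets are already connected and dominating. Two handles look promising. First, a structural description valid for every $\overline{G}$: $G[V']$ is disconnected precisely when $V'$ splits into two nonempty parts joined by a complete bipartite graph in $\overline{G}$, and $V'$ fails to dominate $G$ precisely when $V'\subseteq N_{\overline{G}}(v)$ for some vertex $v$. These become especially simple when $\overline{G}$ is a disjoint union of small cliques; for instance $N(G)$ and $N_{dom}(G)$ then differ only by an additive constant. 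Second, a compression argument: starting from an arbitrary $\overline{G}\in\G_k(n)$, repeatedly replace it by a ``more clustered'' $k$-regular graph --- one with a larger disjoint-clique part, obtained by a degree-preserving edge switch between two near-clique components --- and show that neither $N$ nor $N_{dom}$ of the complement decreases, until the conjectured form is reached; equivalently, on the $G$-side, equalising the part sizes of a complete-multipartite-type graph toward $n-d$ should only help. A small instance already illustrates the desired monotonicity: for $d=3$, $n=6$ one has $K_{3,3}=\overline{2K_3}$, and $K_{3,3}$ has more connected sets than the triangular prism $\overline{C_6}$.

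The main obstacle is precisely this monotonicity step for $N$ and $N_{dom}$. Unlike $i(G)$, which is multiplicative over the components of $\overline{G}$, the quantities $N$ and $N_{dom}$ are global, so toggling a few non-edges of $G$ can create or destroy connected sets in bulk, and verifying that a single switch never decreases the count seems delicate. I would expect a complete proof to need either a carefully designed potential function, a stability or entropy argument, or --- most realistically --- a finite case analysis feasible only for small $n-d$ (the cases where $\overline{G}$ is, respectively, edgeless, a perfect matching, or a disjoint union of cycles look within reach), with the general statement remaining open, which is why it is posed as a conjecture. A secondary technical point is the exact identification of $H$: since $k$-regular graphs on $m$ vertices exist only for $m\ge k+1$ with $km$ even, the admissible orders and shapes of the leftover component depend on divisibility and parity, and --- as the conjecture allows --- the extremal $H$ can genuinely differ among $i$, $N$, and $N_{dom}$.
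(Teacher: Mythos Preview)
This statement is a \emph{conjecture} in the paper, not a theorem; the paper does not attempt a full proof. What the paper does prove is the special case $n-d\mid n$ (so $\overline{G}$ should be a disjoint union of $K_{n-d}$'s with no leftover $H$), and it is worth comparing your plan to that argument.

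For $i(G)$ your complementation-plus-Cutler--Radcliffe route is correct and is essentially the same idea the paper alludes to (``taking the complement graph, when $n-d\mid n$, \ldots\ a balanced complete $r$-partite graph maximizes the number of independent sets'').

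For $N(G)$ and $N_{dom}(G)$ in the divisible case, however, the paper's argument is shorter and more direct than either of the two ``handles'' you propose. Rather than a compression or switching scheme, the paper simply lower-bounds the number of \emph{disconnected} $k$-subsets: each vertex $v$ has at least $n-d-1$ non-neighbours, so at least $\binom{n-d-1}{k-1}$ of the $k$-subsets through $v$ are disconnected; summing and dividing by $k$ gives at least $r\binom{n-d}{k}$ disconnected $k$-sets, with equality forcing every such set to be independent and every vertex to have degree exactly $d$, i.e.\ forcing $\overline{G}=rK_{n-d}$. This yields sharp closed-form upper bounds $N(G)\le 2^n-r2^{n-d}+n+r-1$ and $N_{dom}(G)\le 2^n-r2^{n-d}+r-1$ in one stroke. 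Your first handle (characterising disconnected/non-dominating sets via $\overline{G}$) is pointing in the right direction, but the paper's double-counting shortcut avoids any structural case analysis.

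Your honest assessment of the non-divisible case --- that the monotonicity-under-switch step is the real obstacle, that the optimal leftover $H$ depends on parity/divisibility and may differ among the three quantities --- matches the paper's stance: the paper explicitly notes (footnote) that for $n=13$, $d=8$ the extremal graphs for $i$ and $N$ already differ, and leaves the general statement as a conjecture. So there is no gap to flag beyond the one you already identify; your proposal is a reasonable programme, and for the case the paper does settle, the double-counting argument above is the cleaner execution.
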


Second, even while the exact extremal graphs are hard to describe in general and the sequence $c_d(n)$ is not monotone decreasing (as we prove later, see~\cref{prop:cd_notmono_d4} and~\cref{prop:nonmonotone}), we conjecture that the Moore graphs are extremal and give upper bounds on $c_d$. Moore graphs are $d$-regular graphs of a given girth $g$ which attain the theoretical lower bound (the Moore bound) on the order. 

\begin{conj}\label{conj:Mooregraphs_extremal}
    If there exists a $d$-regular Moore graph $G$ of order $n$, then for every $n'>n$,
    $c_d(n')<c_d(n)=c_d(G)$.
\end{conj}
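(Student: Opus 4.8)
The plan is to split \cref{conj:Mooregraphs_extremal} into (a) \emph{optimality at the Moore order}: $c_d(n)=c(G)$, i.e.\ the maximum of $c$ over $\G_d(n)$ is attained at the Moore graph $G$; and (b) \emph{strict decay afterwards}: $c_d(n')<c_d(n)$ for every admissible $n'>n$, which by~(a) is the same as $c(H)<c(G)$ for all $H\in\G_d(n')$. For~(a) the guiding idea is that a $d$-regular Moore graph of order $n$ is exactly a $d$-regular graph of order $n$ of minimum possible diameter --- for instance a short Moore-bound computation shows that for girth-$5$-type Moore graphs it is the \emph{unique} $d$-regular graph on $n=d^2+1$ vertices of diameter $2$, equivalently the unique maximiser of the number of pairs at distance $2$ (and for $n=d+1$ it is $K_{d+1}$, so~(a) is vacuous). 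I would therefore attempt a switching argument with a potential such as $\Phi(G')=\sum_{\{x,y\}}\operatorname{dist}_{G'}(x,y)$ or $\#\{\text{pairs at distance}\ge 3\}$: starting from $G'\in\G_d(n)$ that is not the Moore graph, exhibit a $2$-switch (an edge swap preserving $d$-regularity) strictly decreasing $\Phi$, and show $N$ does not decrease under it. Since $N$ is monotone under edge addition but a $2$-switch trades two edges for two others, the delicate point is to express $N(G'')-N(G')$ as a signed sum over the connected sets meeting the four swapped endpoints and to argue it is nonnegative; I expect this to be manageable for the small Moore graphs --- where the statement is in any case confirmed by the small-order computations in this paper --- and to need extra structural input for, say, the Hoffman--Singleton graph.

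For~(b) the target is $N(H)\le c(G)^{n'}$ for all $H\in\G_d(n')$, strict when $n'>n$. The natural route is a local counting bound refining the product theorem of~\cite{CGFS86} used in~\cite{Bjorklund12,KKK18}: fix a linear order on $V(H)$, charge each connected set to an associated vertex together with a bounded record of how it grows within the ball of radius $k$ about that vertex, and deduce $N(H)\le\prod_{w\in V(H)}\mu(w)^{\alpha}$ for a suitable exponent $\alpha$, where $\mu(w)$ depends only on the induced subgraph on that ball. The structural heart is to show that, subject to $d$-regularity and to the arithmetic every such local configuration must satisfy, $\mu(w)$ is maximised by the configuration realised in $G$, and with equality only when the $k$-ball at $w$ induces a full copy of $G$ --- which for a Moore graph is automatic, as its $k$-ball is the whole graph. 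Granting this, $\prod_w\mu(w)^{\alpha}=c(G)^{n'}$, so $N(H)\le c(G)^{n'}$, with equality only if every $k$-ball of $H$ induces a copy of $G$; such a ball is then $d$-regular and hence a whole component, so $H$ is a disjoint union of copies of $G$, and since $N(tG)=t\,N(G)<N(G)^{t}=c(G)^{tn}$ for $t\ge 2$ while $t=1$ forces $n'=n$, the inequality is strict for every $n'>n$. Over $\G_{d,g}$ with $g$ the girth of $G$ the scheme should be easier, since the optimal local configuration is then forced to be a tree.

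The main obstacle is exactly this calibration: the bounds currently obtainable from~\cite{CGFS86} all overshoot $c(G)$ --- which is why the improved upper bounds in this paper are only conditional on \cref{conj:Mooregraphs_extremal} --- and the difficulty is real, since a graph on more than $n$ vertices can still have all of its $k$-balls look ``locally Moore'', so the charging must be arranged to distribute carefully the connected sets reaching beyond any fixed ball. A full proof therefore needs a genuinely sharper, Moore-tight local scheme that I do not expect to follow from the product theorem alone; an alternative worth exploring is a transfer-matrix formulation of the count in which one shows that the coefficient matrix associated with $G$ has the largest leading eigenvalue among the competitors. Pending such a method, the realistic partial results are: a computer-assisted verification of~(a) and of $c_d(n')<c(G)$ for all admissible $n'$ up to a fixed bound (which settles the Petersen case $d=3$, $n=10$ outright); the $\G_{d,g}$-analogue of the conjecture; and, for the asymptotic tail $n'\to\infty$, only the weaker $c_d(n')<2$ of~\cite{Bjorklund12,KKK18} --- so the sharp large-$n'$ regime, the real content of the conjecture, remains the crux.
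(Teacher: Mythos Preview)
The statement you are trying to prove is \cref{conj:Mooregraphs_extremal}, which is an \emph{open conjecture} in the paper; there is no proof to compare your proposal against. The paper's support for it consists of (i) the computational data in \cref{tab:c_{3,g}(n)} and \cref{tab:c_{4,g}(n)}, confirming both that the Moore graphs attain $c_d(n)$ at their own order and that $c_{d,g}(n')$ stays below $c(G)$ for the small $n'$ reached, and (ii) the \emph{conditional} upper bounds on $c_d$ in \cref{tab:overview_c3_c4}. Nothing in the paper claims a proof of either part (a) or part (b) of your decomposition.

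Your write-up is itself candid that it is a plan rather than a proof, and the gaps you flag are real. For (a), a $2$-switch preserves regularity but need not increase $N$: the signed sum $N(G'')-N(G')$ can go either way in general, and you give no mechanism forcing nonnegativity beyond ``I expect this to be manageable''. For (b), you correctly identify that the existing product-theorem bounds from~\cite{CGFS86,Bjorklund12,KKK18} overshoot $c(G)$; this is precisely why the paper presents the Moore-graph upper bounds only conditionally. Your heuristic that equality in a Moore-tight local bound forces every ball to induce a copy of $G$ is the right shape, but obtaining such a bound is exactly the unsolved problem. Finally, your remark that computer verification ``settles the Petersen case $d=3$, $n=10$ outright'' is not correct: the tables stop at $n'=22$ (for $g=3$) and $n'=30$ (for $g\ge 7$), so only finitely many $n'$ are checked and the tail $n'\to\infty$ remains open even for $d=3$.
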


Once the latter conjecture is proven, this would result in better upper bounds for $c_d.$ 
The best lower- and upper bounds from different papers, rounded up to $3$ decimals, are given in~\cref{tab:overview_c3_c4}, the upper bounds depending on~\cref{conj:Mooregraphs_extremal} are in italics and red.

\begin{table}[h]
    \centering
    \begin{tabular}{|c|c c | c c |}
\hline
& $c_3 \ge$ & $c_3 \le$ & $c_4 \ge$ & $c_4 \le$ \\
\hline
\cite{Bjorklund12} &  & 1.968 & & 1.987 \\
\cite{KKK18} & 1.765 & 1.935 & 1.893 & 1.981  \\
\cite{Vince20} & 1.554 & & & \\
\cite{John22b} & 1.627 & 1.956  & & \\ 
This work & 1.792 &\textit{ \textcolor{red}{1.821} }& 1.897 & \textit{\textcolor{red}{1.932}} \\
\hline
\end{tabular}
    \caption{Bounds on $c_3$ and $c_4$}
    \label{tab:overview_c3_c4}
\end{table}

We can also compare the lower bounds for $\ct_d$ with the work of~\cite{Bjorklund12} (see~\cref{tab:overview_ct345}). We remark that our improved constructions use smaller gadgets than those used in~\cite{Bjorklund12}.

\begin{table}[h]
    \centering
    \begin{tabular}{|c|c c   c |}
\hline
& $\ct_3 \ge$  & $\ct_4 \ge$ &  $\ct_5 \ge$ \\
\hline
\cite{Bjorklund12} & 1.648   & 1.838 & 1.923 \\
This work & 1.743   & 1.875 & 1.940 \\
\hline
\end{tabular}
    \caption{Lower bounds on $\ct_d$ for $d \in \{3,4,5\}$}
    \label{tab:overview_ct345}
\end{table}

\subsection{Outline of the paper}\label{subsec:outline}

We derive some fundamental results and make some additional observations in~\cref{sec:el_obs}.
In~\cref{sec:cubic_fam1}, we prove a lower bound for the number of connected sets in cubic graphs by considering an elementary family.
This family is described in a general form in~\cref{sec:lowerbounds}. We derive improved lower bounds for $c_d$ and $\ct_d$ by considering these families.
\cref{sec:d34_smallorder} contains the graphs (sometimes among the graphs with an additional girth condition) maximizing $c_d(n)$ and $\ct_d(n)$ for $d \in \{3,4\}$ and small $n.$
Finally, in the conclusion,~\cref{sec:conc}, we summarize our results, and add some remarks on related quantities.
In the appendix, we briefly explain the computer programs used for computations in~\cref{sec:lowerbounds} and~\cref{sec:d34_smallorder}.

\section{Fundamental results}\label{sec:el_obs}

In this section, we prove multiple fundamental insights related to the number of connected sets in regular graphs.
We first prove that the maximum number of connected subsets is (as expected) attained by a connected graph.
\begin{prop}\label{prop:extremal_is_connected}
    For every disconnected $d$-regular graph $G$ of order $n$, $c(G)<c_d(n).$
\end{prop}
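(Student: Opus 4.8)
The plan is to show that a disconnected $d$-regular graph $G$ of order $n$ can be replaced by a connected $d$-regular graph $G'$ of the same order with strictly more connected sets, which gives $c(G) < c(G') \le c_d(n)$. First I would observe that every connected set of $G$ lies entirely within a single connected component, so if the components are $G_1, \dots, G_k$ with $k \ge 2$, then $N(G) = \sum_{i=1}^k N(G_i) - (k-1)$ (subtracting the empty set counted multiple times, if one adopts the convention that $\emptyset$ is connected; otherwise $N(G) = \sum_i N(G_i)$, and the argument is the same). In particular $N(G)$ grows only additively in the sizes of the components, whereas we want to exhibit a connected graph on the same vertex set whose count grows multiplicatively.

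The key step is a local surgery that merges two components while preserving $d$-regularity and strictly increasing the number of connected sets. Take two components $G_1$ and $G_2$. Since $G_1$ is $d$-regular with $d \ge 1$ (the case $d = 0$ is trivial since then $N(G)$ is a fixed function of $n$ and $c_d(n)$ is attained by $\overline{K_n}$ on one vertex... actually $d=0$ forces $G$ connected only if $n=1$; one should either exclude $d=0$ or note $c_0(n) = (n+1)^{1/n}$ is attained by the unique graph, so the statement is about $d \ge 1$), pick an edge $u_1v_1$ in $G_1$ and an edge $u_2v_2$ in $G_2$, delete both, and add the edges $u_1u_2$ and $v_1v_2$. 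The resulting graph $G'$ is still $d$-regular, and $G_1 \cup G_2$ now lies in one component of $G'$. Every connected set of $G$ is still a connected set of $G'$ (edge additions only help; I must check the two deleted edges were not needed, which is why I delete an edge from within a component rather than a bridge — removing one edge from a $d$-regular graph with $d \ge 2$ keeps each $G_i$ connected, and for $d = 1$ each component is a single edge $K_2$ and the surgery turns two copies of $K_2$ into a $C_4$, which one checks directly has $9 > 3 + 3 - 1$... one should handle $d=1$ as an explicit base case). Moreover $G'$ has strictly more connected sets: for instance, $\{u_1, u_2\}$ is a connected set of $G'$ that is not a connected set of $G$. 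Iterating this merge $k-1$ times yields a connected $d$-regular graph $G^*$ on $n$ vertices with $N(G^*) > N(G)$, hence $c(G) < c(G^*) \le c_d(n)$.

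I expect the main obstacle to be the bookkeeping in low-degree or small-component edge cases: ensuring a suitable edge exists inside each component (fine for $d \ge 1$ since components have at least one edge), ensuring that deleting that edge does not disconnect a component or that if it does, the subsequent additions repair connectivity of the merged piece (true because $G_1 \cup G_2 + u_1u_2 + v_1v_2$ is connected: from any vertex one reaches $u_1$ or $v_1$ within $G_1 - u_1v_1$... which may itself be disconnected into two pieces, one containing $u_1$ and one containing $v_1$, but both are then reconnected through $G_2$ via the new edges). A cleaner route avoiding the disconnection worry is to instead add two edges $u_1u_2$ and $v_1v_2$ without deleting anything — but that breaks regularity. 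The compromise is to note that at worst $G_1 - u_1v_1$ has two components, one with $u_1$ and one with $v_1$; adding $u_1u_2, v_1v_2$ and the fact that $G_2 - u_2v_2$ connects $u_2$ to $v_2$ (again possibly via two pieces, reconnected symmetrically) shows the union is connected. The strict inequality is the easy part, witnessed by any connected set straddling the new edge. With these cases dispatched, the argument is complete.
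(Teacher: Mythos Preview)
Your surgery (delete $u_1v_1\in E(G_1)$ and $u_2v_2\in E(G_2)$, add $u_1u_2$ and $v_1v_2$) is exactly the one the paper uses. The gap is in the sentence ``every connected set of $G$ is still a connected set of $G'$.'' You justify this by observing that removing one edge from a $d$-regular graph with $d\ge 2$ keeps $G_i$ connected; but that is a statement about the whole graph $G_i$, not about every connected induced subgraph of $G_i$. What you would need is that $G_i[S]-u_iv_i$ is connected for every connected $S$ containing $u_i,v_i$, and this is simply false. Already $S=\{u_1,v_1\}$ is a connected set of $G$ (an edge) that becomes disconnected in $G'$; more generally any connected $S\subseteq V(G_1)$ in which $u_1v_1$ is a bridge of $G_1[S]$ is lost. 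So ``no losses plus one new set $\{u_1,u_2\}$'' does not yield $N(G')>N(G)$; you must show the gains outweigh the losses, and nothing in your plan does that.

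The paper closes exactly this gap by splitting the count. Connected sets of $G_1$ not containing both $u,v$ survive unchanged (they never used the deleted edge), and likewise for $G_2$. Each connected set $S\subseteq V(G_1)$ containing both $u,v$ is repaired by adjoining a $w$--$x$ path through $G_2$, so that $u$ and $v$ are reconnected via the new edges $uw,vx$; moreover each such $S$ can be extended in at least two ways, giving $2\,N(G_1,uv)$ new sets. Together with the symmetry assumption $N(G_1,uv)\ge N(G_2,wx)$ this yields $N(G_3)>\overline N(G_1,uv)+\overline N(G_2,wx)+2N(G_1,uv)\ge N(G_1)+N(G_2)$. This repair-and-recount step is the missing idea in your proposal.
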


\begin{proof}
    For $d=2,$ it is sufficient to note that $N(C_a + C_b)= N(C_a)+N(C_b)<N(C_{a+b})$ for every $a,b \ge 3.$
    Note that $N(C_{a})=a^2-a+1$, as for every length $1 \le i \le a-1$ there are $a$ paths of length $i$, and also the whole cycle forms a connected vertex subset.
    Thus $N(C_a)+N(C_b)<N(C_{a+b})$ as $1<2ab.$

    For $d\ge 3,$ we can take two connected $d$-regular graphs $G_1$ and $G_2$. Let $uv$ and $wx$ be an arbitrary edge of $G_1$ and $G_2$, respectively.
    Now by deleting the latter two edges and adding the edges $uw, vx$, we obtain a graph $G_3$ of order $\abs{G_1}+ \abs{G_2}$ which is $d$-regular (see~\cref{fig:connecting2graphs}).
    Now we have that $N(G_3)>N(G_1)+N(G_2).$ 
    Let $N(G_1, uv)$ and $\overline N(G_1, uv)$ be the number of connected vertex subsets containing both $u$ and $v$, respectively not both of $u$ and $v$. The quantities $N(G_2, wx)$ and $\overline N(G_2, wx)$ are defined similarly.
    Without loss of generality, we can assume that $N(G_1, uv) \ge N(G_2, wx).$
    Then $N(G_3) > \overline N(G_1, uv)+ \overline N(G_2, wx)+ 2 N(G_1, uv) \ge N(G_1)+N(G_2).$ 
    The first inequality is true since any connected vertex subset of $G_1$ containing $u,v$ can be extended with a shortest path from $w$ to $x$ in $G_2$, to form a connected vertex subset in $G_3$. Furthermore, we can extend the shortest path in $G_2$ with a vertex from $G_2$ not on the shortest path (so there are at least two ways).
    Any connected vertex subset in $G_1$ not containing both $u$ and $v$, is also connected within $G_3.$
    The connected subsets (edges) $uw$ and $vx$ have not been counted yet, implying that the difference was strict.
    The second inequality is by the assumption that $N(G_1, uv) \ge N(G_2, wx).$
    We conclude that $c_d(n)$ is not attained by a disconnected graph.
\end{proof}

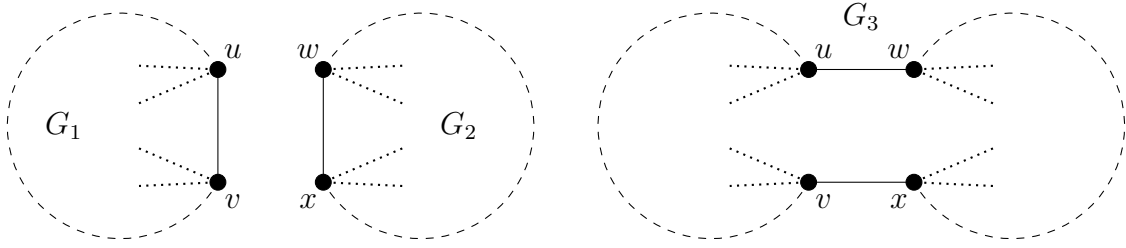
\begin{figure}[h]
    \centering
    \begin{tikzpicture}
         \draw[dashed] (30:1.5) arc (30:330:1.5);
         \draw[dashed] (2.7,0.75) arc (150:-150:1.5);
        \draw[fill=black] (30:1.5) circle (3pt);
        \draw[fill=black] (-30:1.5) circle (3pt);
        \draw[fill=black] (2.7,0.75) circle (3pt);
        \draw[fill=black] (2.7,-0.75) circle (3pt);
        \node (G1) at (-0.75,0.) {$G_1$} ;
        \node (G2) at (4.5,0.) {$G_2$} ;
        \node (w) at (2.5,1) {$w$} ;
        \node (x) at (2.5,-1) {$x$} ;
        \node (u) at (1.5,1) {$u$} ;
        \node (v) at (1.5,-1) {$v$} ;
        \draw (2.7,0.75)--(2.7,-0.75);
        \draw (30:1.5) --(-30:1.5) ;
        \draw[thick, dotted] (0.25,0.3)--(30:1.5) --(0.25,0.8);
        \draw[thick, dotted] (0.25,-0.3)--(-30:1.5) --(0.25,-0.8);
        \draw[thick, dotted] (3.75,-0.3)--(2.7,-0.75) --(3.75,-0.8);
        \draw[thick, dotted] (3.75,0.3)--(2.7,0.75) --(3.75,0.8);
    \end{tikzpicture}\quad
    \begin{tikzpicture}
         \draw[dashed] (30:1.5) arc (30:330:1.5);
         \draw[dashed] (2.7,0.75) arc (150:-150:1.5);
        \draw[fill=black] (30:1.5) circle (3pt);
        \draw[fill=black] (-30:1.5) circle (3pt);
        \draw[fill=black] (2.7,0.75) circle (3pt);
        \draw[fill=black] (2.7,-0.75) circle (3pt);
       \node (G3) at (2,1.45) {$G_3$} ;
        \node (w) at (2.5,1) {$w$} ;
        \node (x) at (2.5,-1) {$x$} ;
        \node (u) at (1.5,1) {$u$} ;
        \node (v) at (1.5,-1) {$v$} ;
        \draw (2.7,0.75)--(30:1.5);
        \draw  (2.7,-0.75)--(-30:1.5) ;
        \draw[thick, dotted] (0.25,0.3)--(30:1.5) --(0.25,0.8);
        \draw[thick, dotted] (0.25,-0.3)--(-30:1.5) --(0.25,-0.8);
        \draw[thick, dotted] (3.75,-0.3)--(2.7,-0.75) --(3.75,-0.8);
        \draw[thick, dotted] (3.75,0.3)--(2.7,0.75) --(3.75,0.8);
    \end{tikzpicture}
    \caption{$G_1 + G_2$ and $G_3$}
    \label{fig:connecting2graphs}
\end{figure}

Next, we prove that the number of small connected sets is maximized by graphs with large girth, as a consequence of $\ex(n,T,K_{1,d+1})$ for a fixed tree $T$ being attained by graphs with sufficiently large girth (see~\cite{AS15} for the introduction of this concept in extremal graph theory). 
Here $\ex(n,T,H)$ is the maximum possible number of copies of $T$ that can be found in an $H$-free graph of order $n$.

\begin{prop}
    Let $d>0$ be a fixed integer.
    For $n$ sufficiently large in terms of $k$, the number of connected vertex subsets of size $k$ in a graph of order $n$ and maximum degree bounded by $d$ is maximized by any graph for which the girth is at least $k.$
\end{prop}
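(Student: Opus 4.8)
The plan is to reduce counting connected $k$-subsets to counting tree subgraphs and then feed the problem into the theory of the generalized Tur\'an numbers $\ex(n,T,K_{1,d+1})$. Write $\mathcal T_k$ for the set of isomorphism types of trees on $k$ vertices, and for a graph $G$ let $\mathrm{cop}_T(G)$ be the number of (not necessarily induced) subgraphs of $G$ isomorphic to $T$. The starting observation is a spanning-tree injection: if $S\subseteq V(G)$ has $|S|=k$ and $G[S]$ is connected, then $G[S]$ has a spanning tree, which is a tree subgraph of $G$ on vertex set $S$; choosing one such tree $t(S)$ for every connected $k$-set $S$ gives an injection from connected $k$-sets into tree subgraphs of $G$ on $k$ vertices, since $S$ is recovered as $V(t(S))$. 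Hence the number of connected $k$-sets of $G$ is at most $\sum_{T\in\mathcal T_k}\mathrm{cop}_T(G)$. Since having maximum degree at most $d$ is the same as being $K_{1,d+1}$-free, $\mathrm{cop}_T(G)\le \ex(n,T,K_{1,d+1})$ for each $T$, so the number of connected $k$-sets of any $G$ with $\Delta(G)\le d$ and $|V(G)|=n$ is at most $\sum_{T\in\mathcal T_k}\ex(n,T,K_{1,d+1})$.

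Next I would show that this bound is attained, for $n$ large in terms of $k$, by a graph $H$ of girth larger than $k$ (in particular of girth at least $k$). The key input, from the generalized Tur\'an theory (see~\cite{AS15}), is that for a fixed tree $T$ the extremal graphs for $\ex(n,T,K_{1,d+1})$ are, for $n$ large, essentially $d$-regular graphs of arbitrarily large girth. Concretely, one takes $H$ on $n$ vertices with $\Delta(H)\le d$ (with as many degree-$d$ vertices as parity allows) and girth exceeding $k$ — such $H$ exists once $n$ is large, by classical existence results for high-girth (near-)regular graphs — and verifies that $\mathrm{cop}_T(H)=\ex(n,T,K_{1,d+1})$ for \emph{every} $T\in\mathcal T_k$ simultaneously. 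The point is that because the ball of radius $k$ around every vertex of $H$ is a tree, the number of embeddings of any $T\in\mathcal T_k$ into $H$ hits the ``generic'' value, and a greedy embedding argument shows this generic value is an upper bound for $\mathrm{cop}_T$ over all graphs of maximum degree at most $d$: short cycles or low-degree vertices can only force collisions and thus reduce the count.

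Finally I would close the loop. If $G$ has girth larger than $k$, then for every connected $k$-set $S$ the graph $G[S]$ is acyclic — a cycle would have length at most $k$, contradicting the girth — hence a tree, and conversely every tree subgraph of $G$ on $k$ vertices is induced for the same reason. So for such $G$ the map $S\mapsto G[S]$ is a bijection from connected $k$-sets onto tree subgraphs on $k$ vertices, and the spanning-tree bound of the first paragraph is an equality: the number of connected $k$-sets of $G$ is exactly $\sum_{T\in\mathcal T_k}\mathrm{cop}_T(G)$. Applying this to the graph $H$ of the previous paragraph gives that its number of connected $k$-sets equals $\sum_{T\in\mathcal T_k}\ex(n,T,K_{1,d+1})$, i.e.\ it meets the universal upper bound. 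Hence $H$, which has girth at least $k$, maximizes the number of connected $k$-subsets; equivalently, restricting the extremal problem to graphs of girth at least $k$ loses nothing.

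The step I expect to be the main obstacle is the simultaneous attainment in the second paragraph: it does not suffice that each $\ex(n,T,K_{1,d+1})$ is individually realized by some large-girth graph — one needs a single near-$d$-regular graph of girth larger than $k$ that realizes all of them at once. Establishing this cleanly amounts to pinning down the ``generic'' embedding count of a tree into a $d$-regular tree-like neighbourhood (being careful about the choice of root and the automorphisms of $T$) and proving it dominates $\mathrm{cop}_T$ for every graph of maximum degree at most $d$, together with the minor bookkeeping required when $dn$ is odd and no perfectly $d$-regular graph on $n$ vertices exists.
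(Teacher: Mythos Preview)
Your proposal is correct and follows essentially the same route as the paper: bound connected $k$-sets by $\sum_{T\in\mathcal T_k}\ex(n,T,K_{1,d+1})$ via a spanning-tree injection, then observe that a (near-)$d$-regular graph of girth exceeding $k$ simultaneously realizes every $\ex(n,T,K_{1,d+1})$ and makes the injection a bijection. The paper handles the simultaneous-attainment step you flag as the main obstacle by rooting each tree at its centre and counting extensions greedily, which depends only on the degree sequence once the girth forbids collisions; your more explicit worries about girth $>k$ versus $\ge k$ and about the parity of $dn$ are points the paper simply glosses over.
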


\begin{proof}
    Let $T$ be a fixed tree of order $k$. Remember that any tree has a central vertex or central edge.
    Once the central vertex or central edge is mapped to a vertex or edge of a graph $G$, upper bounds on the number of copies of $T$ can be given in function of the maximum degree $d$.
    Furthermore, these are sharp when every degree is exactly equal to $d$ and the girth is at least $k$ (as then no copy of $T$ is double-counted).

    Now, since any connected $k$-vertex subset contains a spanning tree (at least one), the number of connected $k$-sets is upper bounded by $\sum_{ T \in \mathcal T_k} \ex(n,T,K_{1,d+1})$, where $\mathcal T_k$ is the set of all non-isomorphic trees of order $k$.
    
    When the girth is large, every connected $k$-set induces a unique tree and thus all upper bounds are sharp, leading to the result.
\end{proof}

Since independent sets are (except for singletons) not connected,~\cref{conj:maxgiven_nd} may be considered as a surprising conjecture, as it implies that certain (complete partite) graphs maximize $i(G)$ and $N(G)$ at the same time.
We will prove it for $d$-regular graphs of order $n$, for which $n-d \mid n.$

It is trivial that when $d+1 \mid n$, the union of $K_{d+1}$s maximizes the number of cliques in a $d$-regular graph of order $n$. 
Taking the complement graph, when $n-d\mid n$, $r= \frac{n}{n-d}$, a balanced complete $r$-partite graph maximizes the number of independent sets in such a graph.
We prove that in this case, the same is true when maximizing the number of connected vertex subsets.\footnote{In general, the extremal graphs for $i$ and $N$ are different, also for $n<2d$. E.g. we verified this for $n=13, d=8$.}

\begin{prop}

    Let $d\ge 2, n \ge d+1$, $n-d \mid n$ and $r= \frac{n}{n-d}.$
    Among all graphs of order $n$ and maximum degree bounded by $d$, the balanced complete $r$-partite graph is the unique graph maximizing $N$ and $N_{dom}.$ 
\end{prop}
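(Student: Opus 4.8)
The plan is to combine two ideas: (i) the extremal graph maximizing $N$ under a maximum-degree bound must be regular (edge-maximality), and (ii) a "compression" / shifting argument that transforms an arbitrary $d$-regular graph into the balanced complete $r$-partite graph $K_{r \times (n-d)}$ while never decreasing $N$ (respectively $N_{dom}$), with equality only in the extremal case. Note that $K_{r \times (n-d)}$ is exactly the complement of $r$ disjoint copies of $K_{n-d}$, so its complement has the maximum possible number of $(n-d)$-cliques; this is the $n \mid n-d$ special case of \cref{conj:maxgiven_nd}.

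\textbf{Step 1 (reduction to regular graphs).} Since $N$ and $N_{dom}$ are strictly monotone under edge addition, any graph of order $n$ with maximum degree $\le d$ that maximizes $N$ (or $N_{dom}$) must be edge-maximal, hence $d$-regular (such graphs exist since $d(n-d)$ has the right parity: indeed $K_{r\times(n-d)}$ is one). So it suffices to compare $d$-regular graphs.

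\textbf{Step 2 (the key structural inequality).} Let $G$ be $d$-regular of order $n$ with $n - d \mid n$. The complement $\overline G$ is $(n-1-d)$-regular; set $m = n - d = |V| - (d+1) \ge 1$, so $\overline G$ is $(m-1)$-regular on $rm$ vertices. The target graph is $\overline{K_{r\times m}} = r \cdot K_m$. I would phrase the whole argument in terms of $\overline G$: a set $V'$ is connected-and-dominating in $G$ in particularly controlled ways depending on how $V'$ meets the "co-components" (connected components of $\overline G$), but the cleanest route is a direct local-move argument on $G$ itself. Pick two non-adjacent vertices $u, v$ in $G$ that do \emph{not} have identical non-neighborhoods; the idea is to perform a switch that makes their non-neighborhoods "more nested" (a standard Kleitman–Winkler / Zykov-type symmetrization), show $N$ does not decrease, and iterate until $G$ is a disjoint union of cliques in the complement, i.e. complete multipartite. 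Among complete multipartite $d$-regular graphs on $n$ vertices with $n-d \mid n$, the only one is the balanced one $K_{r \times m}$, since each part must have size exactly $n - d$ for the graph to be $d$-regular. A convex-ity/rearrangement step then pins down balancedness if one instead allows unequal parts and only the degree bound: a vertex in a part of size $s$ has degree $n - s \le d$, forcing $s \ge n - d$, and summing forces all parts to have size exactly $n-d$.

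\textbf{Step 3 (strictness).} To get uniqueness, I would track equality cases in the switching step: each switch that genuinely changes the graph strictly increases $N$ (and $N_{dom}$) because one can exhibit an explicit connected set created by the new edges but destroyed in no compensating way — mirroring the strict-inequality bookkeeping already used in the proof of \cref{prop:extremal_is_connected}. Hence any non-balanced-complete-multipartite $d$-regular graph admits a strictly improving switch, so it is not extremal.

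\textbf{The main obstacle} I anticipate is Step 2: making the switching move actually monotone for $N$. For independent sets the Zykov symmetrization is classical and easy (replace $N(u) \cap$ nothing... the neighborhood of $u$ by that of $v$), but connectedness is a global, non-monotone-under-vertex-duplication property, so a naive "copy the non-neighborhood" move need not preserve $N$. The likely fix is to choose the move as a single edge-switch of the form "delete $uv$-type non-edges, add others" chosen so that the new graph is a supergraph of $G$ after relabelling, or to set up a direct injection from connected sets of $G$ into connected sets of $G'$ together with surplus sets witnessing strictness; alternatively one argues via the complement, where $N(G)$ counts something about covering the co-components and a compression that merges two co-components into nested cliques can be shown to only create connected sets. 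Getting this injection right — especially simultaneously for the dominating version $N_{dom}$, where one must also check domination is preserved — is where the real work lies; the degree/parity arithmetic forcing the balanced partition is routine by comparison.
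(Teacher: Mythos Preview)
Your plan and the paper's proof diverge completely. The paper gives a short, direct double-counting argument that entirely avoids switching or compression: for each $k \ge 2$ and each vertex $v$, the $k$-sets consisting of $v$ together with $k-1$ non-neighbours of $v$ are disconnected (since $v$ is isolated in the induced subgraph), and there are at least $\binom{n-d-1}{k-1}$ of them. Summing over $v$ and dividing by $k$ (a set is counted once per isolated vertex it contains, so at most $k$ times) gives at least $\frac{n}{k}\binom{n-d-1}{k-1}=r\binom{n-d}{k}$ disconnected $k$-sets; summing over $k$ yields $N(G)\le 2^n - r2^{n-d}+n+r-1$. Equality forces every counted set to be counted exactly $k$ times (hence independent) and every vertex to have degree exactly $d$, which together pin down $\overline G = rK_{n-d}$. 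For $N_{dom}$ one just observes that in $K_{r\times(n-d)}$ every connected set of size $\ge 2$ already dominates. No symmetrization, no injections between families of connected sets.

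Your Step~2 is where your proposal genuinely stalls, and you are candid about this. The difficulty you flag is real: Zykov symmetrization (replace $N(u)$ by $N(v)$) does not preserve regularity, so you would need a degree-preserving edge switch instead; but for such a switch there is no a priori reason $N$ should be monotone, since a connected set routed through the old edges incident to $u$ can become disconnected after the switch with nothing to compensate. None of the alternatives you sketch (``supergraph after relabelling'', ``argue via the complement'', ``set up an injection'') is made concrete enough to assess, and the simultaneous treatment of $N_{dom}$ would add a further layer. So as it stands the proposal has a gap exactly where the work is, whereas the paper's counting argument bypasses the whole issue in a few lines; your Step~1 and the arithmetic forcing balancedness are fine but are not needed in the paper's route.
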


\begin{proof}
    Consider any value $k \ge 2.$
    For every vertex $v$, there are at least $n-d-1$ vertices not connected with $v$ and hence at least $\binom{n-d-1}{k-1}$ vertex $k$-subsets containing $v$ which do not induce a connected subgraph.
    This implies that the number of vertex subsets of size $k$ which do not induce a connected subgraph, is at least 
    $$\frac{n}{k}\binom{n-d-1}{k-1}=r\binom{n-d}{k}.$$
    Summing over all $2 \le k \le n-d$, this leads to at least 
    $r(2^{n-d}-(n-d+1))+1=r2^{n-d}-n-r+1$ many non-connected vertex subsets.
    Equality is only possible if every non-connected vertex subset of size $k$ is counted $k$ times, i.e. is independent, and every vertex has degree exactly $d.$
    That is, if the complement graph is a union of copies of $K_{n-d}$, and thus the graph equals the balanced complete $r$-partite graph.

    For the dominating case, note that in a balanced complete $r$-partite graph, every edge is dominating. 
    On the other hand, except for $n=d+1,$ a singleton never can be dominating.
    We thus conclude that for a graph $G$ with $\Delta(G) \le d<n-1$ and $\abs G =n,$
    $$N(G) \le 2^n- r2^{n-d}+n+r-1 \mbox{ and } N_{dom}(G) \le 2^n-r2^{n-d}+r-1. \qedhere$$
\end{proof}

\begin{prop}\label{prop:cd_notmono_d4}
    The sequence $c_4(n):= \max N(G)^{1/n}$, where the maximum is taken over $4$-regular graphs of order $n$, is not decreasing.
\end{prop}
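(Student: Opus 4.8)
The plan is to exhibit two explicit values of $n$ with $c_4(n) < c_4(n')$ for some $n' > n$, which already contradicts monotone decrease. The natural candidates come from the two smallest $4$-regular Moore-type graphs: the complete graph $K_5$ (girth $3$) and the complete bipartite graph $K_{4,4}$ (girth $4$), compared against some intermediate order such as $n=7$ or $n=9$. Concretely, I would first compute $N(K_5)$ and $c_4(5) = N(K_5)^{1/5}$ exactly: since every nonempty subset of $K_5$ induces a connected graph, $N(K_5) = 2^5 - 1 = 31$, so $c_4(5) = 31^{1/5} \approx 1.9873$. Then I would compute $c_4(9)$ (or $c_4(n)$ for another order where a particularly good $4$-regular graph exists, e.g.\ $K_{4,4}$ on $n=8$ if $8 > 5$ suffices, or a circulant on $9$ vertices), and show the corresponding base is smaller than $31^{1/5}$, while also checking that there is an order strictly between (or after) where the base is larger again — but in fact the cleanest route is just: $c_4(5) \approx 1.987$, and we exhibit $n^* > 5$ with $c_4(n^*) < 1.987 < c_4(5)$ together with the trivial fact that $c_4(6)$ or some later term climbs back, OR more simply observe that $c_4(5)$ already exceeds $\limsup_n c_4(n) = c_4 < 1.987$ is false — so I must be careful.

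The actual obstacle is that $c_4(5) = 31^{1/5} \approx 1.9873$ is large precisely because $K_5$ is tiny and complete, so this on its own shows $c_4(5) > c_4(n)$ for large $n$, which is \emph{consistent} with a decreasing sequence. To prove \emph{non}-monotonicity I need a genuine dip followed by a rise. So the key computation is to find a small order $n_0$ where the best $4$-regular graph is comparatively poor, and a larger order $n_1 > n_0$ where a better $4$-regular graph exists, giving $c_4(n_0) < c_4(n_1)$. I would take $n_0$ to be an order where $4$-regular graphs are forced to be ``spread out'' (large girth, e.g.\ the $n_0$ around $9$–$13$ range) and $n_1$ an order admitting a denser, more clique-rich $4$-regular graph, or vice versa — the point being that whether a highly connected near-Moore configuration fits depends non-monotonically on $n$. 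The heavy lifting is a finite exact computation of $N(G)$ over all $4$-regular graphs of orders $n_0$ and $n_1$; this is exactly the kind of computer search referenced in the paper's appendix, so I would invoke those computations (as in \cref{sec:d34_smallorder}) to assert $c_4(n_0) < c_4(n_1)$ for the chosen pair.

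Thus the proof structure is: (i) recall or state the exact values of $c_4(n)$ for the relevant small orders, citing the computational results of \cref{sec:d34_smallorder}; (ii) identify the specific pair $n_0 < n_1$ witnessing $c_4(n_0) < c_4(n_1)$ — I expect $n_0$ and $n_1$ to both be single-digit or low double-digit, with the ``winning'' graph at $n_1$ being a well-structured graph (a circulant or a small Cayley graph); (iii) conclude that $c_4(n)$ is not decreasing. The main obstacle is purely computational/selective: pinning down the smallest honest witness pair rather than the proof idea, since once the pair is named the conclusion is immediate. A secondary subtlety is making sure the comparison is with the \emph{maximum} over all $4$-regular graphs of each order (not just one candidate), which again is handled by the exhaustive search; for the larger order $n_1$ it suffices to produce one good graph as a lower bound on $c_4(n_1)$, while for the smaller order $n_0$ one genuinely needs the exhaustive maximum.
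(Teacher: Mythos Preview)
Your approach is correct and matches the paper's: exhibit a specific pair $n_0 < n_1$ with $c_4(n_0) < c_4(n_1)$, using an exhaustive maximum at $n_0$ and a single good graph at $n_1$. What is missing from your proposal is the actual witness, which the paper supplies explicitly and without any heavy computer search.

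The paper takes $(n_0,n_1)=(7,8)$. There are only two $4$-regular graphs of order $7$, and the extremal one is $K_{3,4}^{++}$ (i.e.\ $K_{3,4}$ with two extra edges in the large side), for which $N(K_{3,4}^{++})=(2^3-1)(2^4-1)+9=114$; hence $c_4(7)=114^{1/7}$. For $n_1=8$ one just uses $K_{4,4}$, with $N(K_{4,4})=(2^4-1)^2+8=233$, giving $c_4(8)\ge 233^{1/8}$. The comparison is then done by the clean trick you did not quite reach: since $c_4(7)<2$ (true for every $c_d(n)$), one has $c_4(7)^8 < 2\cdot 114 = 228 < 233 \le c_4(8)^8$, so $c_4(7)<c_4(8)$. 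No appeal to the large tables of \cref{sec:d34_smallorder} is needed; the entire proof fits in a few lines of arithmetic once the pair $(7,8)$ is named.

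Your instinct that the ``good'' order should host a particularly structured graph was right ($K_{4,4}$ at $n=8$), and your observation that only $n_0$ requires the exhaustive maximum is exactly how the argument is kept short (two graphs to check at $n=7$). The detour through $K_5$ and $n=9$ was unnecessary, though the table in \cref{sec:app2} shows that $(9,10)$ would also work as a witness.
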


\begin{proof}
    There are only $2$ resp. $6$ $4$-regular graphs of order $7$ and $8$~\cite{Me99}.
    The extremal graphs turn out to be $K_{3,4}^{++}=K_{3,4} \cup 2K_2$ (the two edges added in such a way that the graph is $4$-regular), and $K_{4,4}.$
    We have that $N(K_{3,4}^{++})=(2^3-1)(2^4-1)+9=114$ and $N(K_{4,4})=(2^4-1)^2+8=233.$
    This implies that $N(K_{4,4})>2N(K_{3,4}^{++})$.
    Since $c_4(n)<2$ for every $n \in \mathbb N$, the conclusion is immediate.
\end{proof}

Conditional on~\cref{conj:maxgiven_nd}, the non-monotonicity can be proven for all regularities $d \ge 4.$

\begin{prop}\label{prop:nonmonotone}
    Conditional on~\cref{conj:maxgiven_nd}, the sequence $c_d(n):= \max N(G)^{1/n}$ where the maximum is taken over $d$-regular graphs of order $n$, is not monotone for $d \ge 4.$
    Moreover, the sequence $(c_d(n))_n$ can contain arbitrarily long increasing subsequences, for $d$ sufficiently large.
\end{prop}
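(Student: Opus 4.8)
The plan is to leverage Conjecture~\ref{conj:maxgiven_nd} to get an explicit handle on the extremal value of $N(G)$ over $d$-regular graphs of order $n$ when $d+1 \le n \le 2d$, and then mimic the argument of Proposition~\ref{prop:cd_notmono_d4}: exhibit two orders $n < n'$ with $N_{\max}(n') > 2^{\,n'-n}\, N_{\max}(n)$, which since $c_d(n'') < 2$ for all $n''$ forces $c_d(n') > c_d(n)$, hence non-monotonicity. First I would record that, conditional on Conjecture~\ref{conj:maxgiven_nd}, for $d+1 \le n \le 2d$ the maximiser of $N$ among $d$-regular graphs of order $n$ is the complement of $\lfloor n/(n-d)\rfloor$ or so copies of $K_{n-d}$ together with a small leftover graph $H$ with $|H| < 2(n-d)$; when $n-d \mid n$ this is exactly the balanced complete $r$-partite graph from the previous proposition, for which $N = 2^n - r 2^{n-d} + n + r - 1$ with $r = n/(n-d)$. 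This gives a clean closed form to compare across different $n$.

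Next I would pick a convenient infinite family of regularities. The natural choice is to take $n = d+1$ (the complete graph $K_{d+1}$, with $N(K_{d+1}) = 2^{d+1} - 1$) versus $n' = 2d$ when $d$ is even, say $n' = 2d$ so that $n'-d = d$ and $r = 2$, giving the complete bipartite graph $K_{d,d}$ with $N(K_{d,d}) = (2^d-1)^2 + 2d = 2^{2d} - 2^{d+1} + 1 + 2d$. Comparing: $2^{\,n'-n} N(K_{d+1}) = 2^{d-1}(2^{d+1}-1) = 2^{2d} - 2^{d-1}$, whereas $N(K_{d,d}) = 2^{2d} - 2^{d+1} + 1 + 2d$. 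For large $d$ we have $2^{d-1} > 2^{d+1}$ is false — so in fact $N(K_{d,d}) < 2^{\,n'-n}N(K_{d+1})$, meaning this particular pair shows $c_d$ is \emph{large} at $n=d+1$. That is actually the wrong direction; instead I would compare two \emph{non-consecutive} moderate orders, e.g. $n_1$ and $n_2 = n_1 + (n_1 - d)$ with $n_1 - d$ fixed and small (say $n_1 - d = t$), so both lie in the window and both equal balanced complete $(n_i/t)$-partite graphs; then $N_{\max}(n_i) = 2^{n_i} - (n_i/t)2^{t} + n_i + n_i/t - 1$, which is dominated by $2^{n_i}(1 - o(1))$, so the ratio $N_{\max}(n_2)/N_{\max}(n_1)^{n_2/n_1}$ behaves like a positive power of $2$ times a lower-order correction — and by choosing the indices so the correction terms tip the comparison, one gets $c_d(n_2) > c_d(n_1)$. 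The key computation is to show the second-order terms $-(n_i/t)2^t$ hurt the smaller graph proportionally more, which holds once $t$ is a nontrivial fraction of $d$.

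For the second assertion — arbitrarily long increasing subsequences as $d \to \infty$ — I would iterate the above: fix a small $t$ and consider the chain of orders $d+t, d+2t, \dots$ all within $[d+1,2d]$ (there are about $d/t$ of them), each extremal graph being a balanced complete multipartite graph with part size $t$. Writing $N_{\max}(d + jt) = 2^{d+jt} - \frac{d+jt}{t}2^t + (d+jt) + \frac{d+jt}{t} - 1$, I would show that $c_d(d+jt) = N_{\max}(d+jt)^{1/(d+jt)}$ is \emph{increasing in $j$} over this range: taking logarithms, $\frac{1}{d+jt}\log\!\big(2^{d+jt} - O((d/t)2^t)\big) = \log 2 + \frac{1}{d+jt}\log\!\big(1 - O((d/t)2^{t-d-jt})\big)$, and since the correction is negative and its magnitude (relative to the exponent $d+jt$) shrinks as $j$ grows, the whole expression increases with $j$. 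This gives an increasing subsequence of length $\sim d/t \to \infty$. The main obstacle I anticipate is purely bookkeeping: making the inequalities between the closed forms fully rigorous requires being careful that the $o(1)$ corrections genuinely have the claimed sign and size uniformly — in particular handling the cases $n - d \nmid n$ where Conjecture~\ref{conj:maxgiven_nd} only gives ``complement of copies of $K_{n-d}$ plus a bounded graph $H$'' rather than an exact formula, so I would either restrict attention to arithmetic progressions of orders with $n-d$ a fixed divisor of all of them, or absorb the contribution of $H$ (which affects at most $2(n-d) = O(d)$ vertices, hence changes $N$ by at most a $2^{O(d)}$ factor that is negligible against the $2^{n}$ main term when $n$ is close to $2d$). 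Everything else is a direct comparison of explicit exponential expressions.
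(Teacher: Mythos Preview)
Your proposal has a genuine conceptual error. You fix $t = n_1 - d$ and then run through $n_j = d + jt$, asserting each extremal graph is a balanced complete multipartite graph ``with part size $t$''. But $d$ is fixed throughout, so $n_j - d = jt$ grows with $j$; the part size in the conjectured extremal graph is $jt$, not $t$. Your formula $N_{\max}(n_j) = 2^{n_j} - (n_j/t)2^t + \cdots$ is therefore wrong --- the correct multipartite count is $2^{n_j} - r_j\, 2^{jt} + \cdots$ with $r_j = n_j/(jt)$, and even that requires $jt \mid d$, a divisibility constraint you never arrange. The same confusion resurfaces when you speak of ``$n-d$ a fixed divisor of all of them'' and of absorbing the leftover $H$ ``when $n$ is close to $2d$'', whereas your chosen $n$'s sit near $d$. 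The first part of the proposition (non-monotonicity for every $d\ge 4$) never receives a concrete inequality at all.

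The paper sidesteps all of this by working at the opposite end of the window: write $n = 2d - r$ with $r \in \{0,1,\ldots,k\}$. There Conjecture~\ref{conj:maxgiven_nd} places only \emph{one} copy of $K_{n-d}=K_{d-r}$ in the complement, so the extremal graph is $K_{d,d-r}$ together with an $r$-regular graph $H$ on the size-$d$ side, and $N(G) = (2^d-1)(2^{d-r}-1) + N(H) + d - r$. The key observation is $N(H) \le c_r^{\,d} = o(2^d)$ for each fixed $r$, whence a short asymptotic computation gives $c_d(2d-r)$ strictly increasing as $r$ decreases from $k$ to $0$ once $d$ is large enough --- an increasing run of length $k+1$. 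For the ``all $d \ge 4$'' part the paper handles $r \in \{0,1,2\}$ by hand, checking the chain $4N_{\max}(2d-2) < 2N_{\max}(2d-1) < N_{\max}(2d)$ and then invoking $c_d < 2$ exactly as you intended. Your high-level plan (explicit formulas from the conjecture, then the $c_d < 2$ trick) is the right one; it is the bookkeeping --- specifically, treating $n-d$ as constant along your sequence --- that goes astray.
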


\begin{proof}
    When $n=2d-r$, and $r<d/2$, by~\cref{conj:maxgiven_nd}, the $d$-regular graph $G$ of order $n$ maximizing $N(G)$ is the union of $K_{d,d-r}$ and a $r$-regular graph $H$ on the bipartition class of size $d.$
    Now $N(K_{d,d-r} \cup H)=(2^d-1)(2^{d-r}-1)+N(H)+d-r.$ 
    Here $H$ is connected by~\cref{prop:extremal_is_connected} and $N(H)=O(c_r^d)=o(2^d)$.
    Fix $k>0$. Note that for every $r<k,$ $c_r \le c_k.$
    Then for $d$ sufficiently large, $2^{ d-k} >>c_r^d$.
    For $d >>r,$ this implies that $c(G)= \sqrt[n]{1-2^{r-d}-2^{-d} +o(2^{r-d})} \sim 1-\frac{2^r+1}{2^d(2d-r)}.$
    Since the latter is a decreasing function as $r$ increases, $(c_d(n))_n$ is increasing for $2d-r \le n \le 2d.$
    For $n=2d-1,$ the complement has to be $K_{d-1}$ and $K_d \cup M$, i.e. $G$ is $K_{d,d-1}$ with an additional matching $M$ of $\frac d2$ edges in the larger bipartition class.
    In this case, $N(G)=(2^d-1)(2^{d-1}-1)+5d/2-1$.
    Similarly, for $n=2d-2$, $G$ equals $K_{d,d-2}\cup C_d$, where $C_d$ is a cycle added on the larger bipartition class of $K_{d,d-2}.$
    In that case $N(G)=(2^d-1)(2^{d-2}-1)+d^2-1$.
    By comparing these values with $N(K_{d,d}),$ we conclude that for every $d>3$, the sequence $c_d(n)$ is not monotone.
    More precisely, one can check that 
   $$ (2^d-1)(2^{d}-4)+4d^2-4 < (2^d-1)(2^{d}-2)+5d-2< (2^d -1)^2+2d. \qedhere$$\end{proof}

\section{A family of cubic graphs with many connected vertex subsets}\label{sec:cubic_fam1}

In this section we define an elementary family of graphs, which which will be extended in~\cref{sec:lowerbounds}. Let $k \geq 1$ be an integer. Let $G$ be the graph $C_{3k}+kP_1$ (on $4k$ vertices) with some additional edges (defined hereafter). Label the vertices of $C_{3k}$ as $ 1,2, \ldots, 3k $ in order, and the vertices of $kP_1$ as $3k+1, 3k+2,\ldots, 4k$.
For each $i \in [k]$, connect the vertex $3k+i$ with $i, k+i$ and $2k+i.$

We can represent vertex subsets of $G$ with cells in a $4 \times k$-board, where cell $(i,j), i \in [4],j\in [k]$ represents the vertex $k(i-1)+j.$ 
We will call two cells adjacent, if the vertices in $G$ they represent are adjacent.
A subset of cells of the board are connected if the corresponding vertices induce a connected subgraph of $G$.
We call a connected subset of cells of the board a tile.
Note that adjacent cells belong to consecutive columns (we consider column $1$ and $k$ at this point as adjacent as well, i.e. consecutive modulo $k$) or the same column. 
For ease of thinking and understanding the connectedness of the board, we mention all pairs of cells that are adjacent (which is a symmetric property).
A cell in one of the first three rows is adjacent to its horizontal neighbor(s), as well as to the lowest cell in the same column.
The connections between the cells in the first and last column are precisely as follows: $(1,1)$ is adjacent with $(3,k)$, $(2,1)$ is adjacent with $(1,k)$ and $(3,1)$ is adjacent with $(2,k)$.

\begin{figure}[h]
    \centering
        \begin{tikzpicture}[scale=1.15]
        \fill[gray!70!white] (0,2) rectangle (1,4);
        \fill[gray!70!white] (0,0) rectangle (1,1);
        \fill[gray!70!white] (0,3) rectangle (4,4);
        \fill[gray!70!white] (2,0) rectangle (3,1);
        \draw[step=1cm] (0,0) grid (4,4);
    \end{tikzpicture}\quad
    \begin{tikzpicture}[scale=0.52]
\foreach \x in {0,1,...,12}{
\draw[fill] (\x*360/12:4.5) circle (0.15);
\draw (\x*360/12+360/12:4.5)--(\x*360/12:4.5);
}
\foreach \x in {1,4,7,10}{
    \draw[fill] (\x*30+15:1.5) circle (0.15);
    \foreach \y in {0,4,8}{
        \draw (\x*30+30*\y:4.5)-- (\x*30+15:1.5) ;
    }

}

\foreach \x in {0,1,...,3}{
\draw[fill, blue] (\x*360/12:4.5) circle (0.15);
\draw[blue] (\x*360/12+360/12:4.5)--(\x*360/12:4.5);
}
\draw[fill, blue] (4*360/12:4.5) circle (0.15);
\draw[blue] (0:4.5)--(135:1.5)--(120:4.5);
\draw[blue] (60:4.5)--(315:1.5);
\foreach \x in {4,10}{
\draw[fill, blue] (\x*30+15:1.5) circle (0.15);
}
\end{tikzpicture}
    \caption{Example of a tile for the graph $G$ when $k=4$, and the corresponding subgraph in $G$}
    \label{fig:enter-label}
\end{figure}
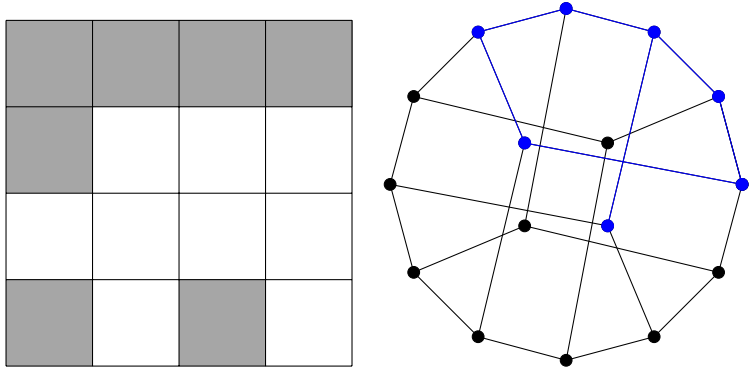

A path of ordered vertices $v_1v_2 \ldots v_{\ell}$ is defined to \textit{go to the right} if for every $r \in [\ell-1]$, it satisfies either $v_{r+1} \equiv v_{r} \pmod k$ or $v_{r+1} \equiv v_{r}+1 \pmod k$.

Similarly, a path of vertices $v_1v_2 \ldots v_{\ell}$ is said to \textit{go to the left} if for every $r \in [\ell-1]$, either $v_r \equiv v_{r+1} \pmod k$ or $v_r \equiv v_{r+1}+1 \pmod k$.

The corresponding definitions for cycles are analogous, where $v_0=v_{\ell}$, and all other pairs of vertices are distinct.

We first prove that it is essentially sufficient to count the number of subsets of the board which are \textit{intensely} connected. 
These are the connected subsets for which every two consecutive columns contain a pair of adjacent cells (between the columns).
For ease of counting, we will assume that the four cells in the first column are present.

Let $\H_1$ be the family of tiles for which not all columns are non-empty.
Let $\H_2$ be the family of tiles for which all columns are non-empty.
Let $\H_2^\bullet \subset \H_2$ be the subfamily containing all tiles that contain all cells of the first column.
Let $\overline \H_2^\bullet \subset \H_2^\bullet$ be the subfamily containing all tiles containing a cycle which goes to the right (equivalently to the left), passing through every column.


\begin{prop}\label{prop:afschattingenH1etc}
We have a) $\abs{\H_1} \le n(n-1) \abs{\H_2}$, b) $\abs{\H_2} \le 15 \abs{\H_2^\bullet}$ and c) $\abs{\H_2^\bullet} \le 15n  \abs{\overline \H_2^\bullet}$.
\end{prop}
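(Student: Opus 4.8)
The plan is to prove the three inequalities by constructing, for each tile in the larger family, a bounded-to-one map into the smaller family, where each fiber has controlled size. For part (a), the idea is that a tile in $\H_1$ fails to occupy some column, so the board ``wraps around'' into an honest linear strip: pick a largest block of consecutive empty columns, and cyclically rotate so that this block sits at the right end. Then the nonempty columns form a contiguous segment $[a,b]$, and the restriction of the tile to this segment is a connected subset occupying \emph{every} column of the smaller board on those columns. To turn this into a member of $\H_2$ on the full $4\times k$ board one needs a canonical way to record the segment; since the segment is determined by its two endpoints, there are at most $\binom{n}{2}\le n(n-1)$ choices, and the tile itself is recovered from the $\H_2$-object together with this data (after un-rotating). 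Hence $\abs{\H_1}\le n(n-1)\abs{\H_2}$. A small technical point to check here is that ``connected, occupying every column of a shorter linear board'' is genuinely the relevant notion, i.e. that cutting the cyclic adjacencies does not disconnect the restricted tile — but this holds because a tile avoiding a whole column cannot use any edge of $G$ that crosses that column.

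For part (b), I would map a tile $T\in\H_2$ to the tile $T^\bullet\in\H_2^\bullet$ obtained by adjoining the (at most) missing cells of the first column, i.e. $T^\bullet = T\cup(\text{column }1)$. This is clearly still connected (each cell of column $1$ is adjacent to some cell in column $2$, or to the bottom cell of column $1$, so adding them to a tile that already meets column $1$ keeps connectivity), and it lies in $\H_2^\bullet$. The fiber over a given $T^\bullet$ consists of the tiles $T$ with $T\setminus(\text{column }1)=T^\bullet\setminus(\text{column }1)$ and $T\cap(\text{column }1)$ an arbitrary nonempty subset that still makes $T$ connected; since column $1$ has $4$ cells, there are at most $2^4-1=15$ such subsets, giving $\abs{\H_2}\le 15\abs{\H_2^\bullet}$.

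For part (c), given $T\in\H_2^\bullet$ I want to produce a tile in $\overline\H_2^\bullet$ by adding a ``wrap-around'' cycle that goes to the right through every column. Because $T$ is intensely connected (every pair of consecutive columns, \emph{including} columns $k$ and $1$, shares an adjacent pair — this is where I use that we only need to count intensely connected subsets, per the discussion before the proposition), one can walk from column to column staying inside $T$ and, at each column boundary, cross via an adjacent pair; this traces a closed walk through all columns. Extracting from it an actual cycle that goes to the right (or, using the symmetry of the definitions, to the left) may require adding a bounded number of extra cells per column — at most all $4$ cells in each of the finitely many columns where the walk doubles back — and the choice of which cells to add, read off from the endpoints, costs a factor of at most $15n$ (roughly: $n$ for where the ``correction'' starts and $15$ for the subset of a column added, or $15$ for a single corrected column times $n$ columns' worth of bookkeeping). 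The resulting tile is in $\overline\H_2^\bullet$ and determines $T$ uniquely once we remember the added data, so $\abs{\H_2^\bullet}\le 15n\,\abs{\overline\H_2^\bullet}$.

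The main obstacle is part (c): parts (a) and (b) are essentially bookkeeping, but in (c) one must argue carefully that intense connectivity actually forces the existence of a \emph{monotone} (right-going) cycle through all columns after a bounded modification, and that this modification can be encoded with only a linear (in $n$) overhead. The delicate point is that a closed walk through all columns need not be monotone — it can backtrack — and converting it into a right-going cycle requires either (i) showing the backtracking can be confined to boundedly many columns that we simply ``fill in'' completely, or (ii) a direct inductive construction of the cycle column-by-column using the guaranteed inter-column adjacencies, charging at most a constant number of cell-choices to each column and absorbing the total into the factor $15n$. I would pursue (ii): process columns $1,2,\dots,k$ in order, maintaining a right-going path inside (an enlargement of) $T$, and at the final step close it up using the adjacency between columns $k$ and $1$; the enlargement at each step is a subset of a single $4$-cell column, and the only genuinely free choices — hence the only source of the multiplicative loss — occur at the place where the path must ``switch rows,'' which happens in a bounded number of columns, yielding the claimed bound.
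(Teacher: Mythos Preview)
Your arguments for (a) and (b) are in the right spirit. For (b) you match the paper exactly. For (a) the paper's map is slightly cleaner than what you sketch: rather than restricting to a smaller board (which does not actually produce an element of $\H_2$ on the full $4\times k$ board --- you never say what the ``$\H_2$-object'' is), the paper simply \emph{fills in} the block of $i$ consecutive empty columns starting at index $j$ with full columns; the result lies in $\H_2$, and the pair $(i,j)$ is the only data needed to invert.

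Part (c) is where your proposal has a genuine gap. First, your standing assumption that every $T\in\H_2^\bullet$ is intensely connected is unjustified: $\H_2^\bullet$ only requires that all columns be nonempty and column~$1$ be full. For example, with $k=4$, take column~$1$ full, column~$2=\{(1,2)\}$, column~$3=\{(2,3)\}$, column~$4=\{(2,4)\}$; this tile is connected via the wrap-around $(2,4)\sim(3,1)$ but has no adjacent pair between columns $2$ and $3$. Second, even granting intense connectivity, your column-by-column scheme does not stay within the budget $15n$: you claim the path ``switches rows'' in only a bounded number of columns, but give no argument for this, and a priori such switches could occur in every column, costing a factor exponential in $k$ rather than linear.

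The paper's argument for (c) is much simpler and avoids both problems. Every cell of $T$ can reach column~$1$ by a monotone path going either left or right. If $T\notin\overline\H_2^\bullet$, let $i$ be the \emph{smallest} column index containing a cell reachable from column~$1$ only by going right. Fill in that single column~$i$ completely. The result lies in $\overline\H_2^\bullet$: one can go right from column~$1$ through columns $2,\ldots,i-1$ (all of whose cells are left-reachable by minimality of $i$) into the now-full column~$i$, and from there pick up the right-going path of the witness cell through columns $i,i+1,\ldots,k,1$, yielding the required right-going cycle. Since only \emph{one} column is modified, each fibre has size at most $15$, and there are at most $n$ choices for $i$. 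The key idea you are missing is precisely that a single well-chosen column suffices.
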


\begin{proof}
    For the first inequality, note that for a tile in $\H_1,$ there are a number $i$ of consecutive columns which are empty, starting with the one of index $j$. By replacing all of these with full columns, we end up with a tile in $\H_2 $. 
    Furthermore, for every two different tiles with the same pair $(i,j)$ associated, the result of the operation is a different subset.
    This implies that $\abs{\H_1} \le n(n-1)  \abs{  \H_2  }.$

    The second inequality is almost immediate. Adding the non-present cells of the first column, results into a subset with the desired inequalities. At most $15$ different tiles in $\H_2$ can result into the same tile by adding such vertices.

    To finish, we prove the third inequality.
    Take a tile in $\H_2^\bullet$ which does not belong to $\overline \H_2^\bullet.$ 
    For every cell (not in the first column), there is one direction, such that there is a path connecting the cell with the cells in the first column in this direction (left or right).
    Let column $i$ be the most left (the one with smallest index) which contains a cell which is only connected with the first column with a path going to the right.
    Add the missing vertices in this column. We end up with a tile in $\overline \H_2^\bullet$.
    For every possible completed column, there are at most $15$ initial possible configurations.
    Since there are at most $n$ indices for that column, the inequality is clear. 
\end{proof}

From~\cref{prop:afschattingenH1etc}, we conclude that $\abs{\H_1}+\abs{\H_2}<225n^3 \abs{\overline \H_2^\bullet}$. 
The counting problem for the number of tiles in $\overline \H_2^\bullet$ is easier.


We now consider the restriction of the tiles in $\overline \H_2^\bullet$ to the $4 \times j$-board (first $j$ columns, where $j \geq 2$), for which column $j$ is a fixed configuration.
Below, we count these partial tiles.
The following $6$ quantities count cases, where all contained/occupied cells are connected with the first row (within the partial tile). 
We call such a cell a \emph{fixed cell}; a cell that is occupied and connected with the first column by means of a path going to the left. Let:
\begin{itemize}
    \item $x(j)$ be the number of tiles for which column $j$ contains one fixed particular cell (i.e. a cell occupied and connected)
    \item $\overline x(j)$ be the number of tiles for which column $j$ contains one particular cell among the first $3$, and the final cell
    \item $y(j)$ be the number of tiles for which column $j$ contains two fixed cells among the first three
    \item $\overline y(j)$ be the number of tiles for which column $j$ contains two particular cells among the first $3$, and the final cell
    \item $z(j)$ be the number of tiles for which column $j$ contains the first three cells
    \item $\overline z(j)$ be the number of tiles for which column $j$ contains all cells 
\end{itemize}

Note that some of these cases appear multiple times. E.g. there are $3$ different tiles for the one in column $j$, where for each of them, the number of partial tiles ending with that column are counted by $x(j)$.

We also consider the number of tiles for which some cells are not yet connected within the partial tile, which we call \emph{loose cells}.
A loose cell in column $j$ is an occupied cell which is not connected within the partial $4 \times j$-board (or not connected with a path going to the left towards the first column). Let:

\begin{itemize}
    \item $y_1(j)$ be the number of tiles for which column $j$ contains a fixed cell and a fixed loose cell (among the first three)
    \item $z_1(j)$ be the number of tiles for which column $j$ contains two fixed cells and $1$ fixed loose cell (among the first three)
    \item $z_2(j)$ be the number of tiles for which column $j$ contains one fixed cell and $2$ fixed loose cells (among the first three)
\end{itemize}

Note that $x(2)=\overline x(2)=y(2)=\overline y(2)=z(2)=\overline z(2)=1$ and $y_1(2)=z_1(2)=z_2(2)=0.$

One can now recursively count each of these quantities.

\begin{align*}
    x(j+1)&=x(j)+\overline x(j) + 2y(j)+2\overline y(j) +z(j)+\overline z(j) \\
    \overline x(j+1)&=x(j)+\overline x(j) + 2y(j)+2\overline y(j) +z(j)+\overline z(j) \\
    y(j+1)&=y(j)+\overline y(j) +z(j)+\overline z(j) \\
    \overline y(j+1)&=2x(j)+2\overline x(j)+3y(j)+3\overline y(j) +z(j)+\overline z(j)+2y_1(j)+2z_1(j) \\
    z(j+1)&=z(j)+\overline z(j) \\
    \overline z(j+1)&=3x(j)+3\overline x(j)+3y(j)+3\overline y(j)+z(j)+\overline z(j)+6y_1(j)+3z_1(j)+3z_2(j)\\
    y_1(j+1)&=x(j)+\overline x(j) +y(j)+\overline y(j)+y_1(j)+z_1(j)\\
    z_1(j+1)&=y(j)+\overline y(j)+z_1(j)\\
    z_2(j+1)&=x(j)+\overline x(j) +2y_1(j)+z_2(j)
\end{align*}

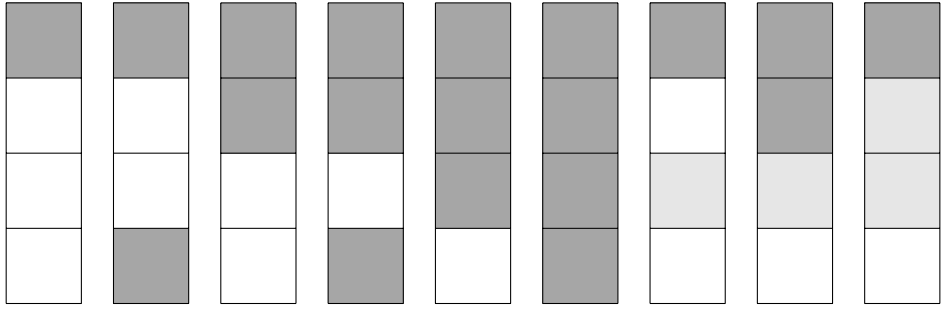
\begin{figure}
    \centering
    \begin{tikzpicture}
        \fill[gray!70!white] (0,3) rectangle (1,4);
        \draw[step=1cm] (0,0) grid (1,4);
    \end{tikzpicture}\quad
    \begin{tikzpicture}
        \fill[gray!70!white] (0,3) rectangle (1,4);
         \fill[gray!70!white] (0,0) rectangle (1,1);
        \draw[step=1cm] (0,0) grid (1,4);
    \end{tikzpicture}\quad
    \begin{tikzpicture}
        \fill[gray!70!white] (0,2) rectangle (1,4);
        \draw[step=1cm] (0,0) grid (1,4);
    \end{tikzpicture}\quad
    \begin{tikzpicture}
        \fill[gray!70!white] (0,2) rectangle (1,4);
        \fill[gray!70!white] (0,0) rectangle (1,1);
        \draw[step=1cm] (0,0) grid (1,4);
    \end{tikzpicture}\quad
    \begin{tikzpicture}
        \fill[gray!70!white] (0,1) rectangle (1,4);
        \draw[step=1cm] (0,0) grid (1,4);
    \end{tikzpicture}\quad
    \begin{tikzpicture}
        \fill[gray!70!white] (0,0) rectangle (1,4);
        \draw[step=1cm] (0,0) grid (1,4);
    \end{tikzpicture}\quad
    \begin{tikzpicture}
        \fill[gray!70!white] (0,3) rectangle (1,4);
        \fill[gray!20!white] (0,1) rectangle (1,2);
        \draw[step=1cm] (0,0) grid (1,4);
    \end{tikzpicture}\quad
    \begin{tikzpicture}
        \fill[gray!70!white] (0,2) rectangle (1,4);
        \fill[gray!20!white] (0,1) rectangle (1,2);
        \draw[step=1cm] (0,0) grid (1,4);
    \end{tikzpicture}\quad
    \begin{tikzpicture}
        \fill[gray!70!white] (0,3) rectangle (1,4);
        \fill[gray!20!white] (0,1) rectangle (1,3);
        \draw[step=1cm] (0,0) grid (1,4);
    \end{tikzpicture}
    \caption{Examples of columns counted by $x,\overline x, y, \overline y, z ,\overline z, y_1, z_1$ and $z_2$ respectively}
    \label{fig:enter-label}
\end{figure}

Consider the coefficient matrix
$$A=\begin{bmatrix}
    1 & 1 & 2 & 2 & 1 & 1 & 0 & 0 & 0 \\
    1 & 1 & 2 & 2 & 1 & 1 & 0 & 0 & 0 \\
    0 & 0 & 1 & 1 & 1 & 1 & 0 & 0 & 0 \\
    2 & 2 & 3 & 3 & 1 & 1 & 2 & 2 & 0 \\
    0 & 0 & 0 & 0 & 1 & 1 & 0 & 0 & 0 \\
    3 & 3 & 3 & 3 & 1 & 1 & 6 & 3 & 3 \\
    1 & 1 & 1 & 1 & 0 & 0 & 1 & 1 & 0 \\
    0 & 0 & 1 & 1 & 0 & 0 & 0 & 1 & 0 \\
    1 & 1 & 0 & 0 & 0 & 0 & 2 & 0 & 1 \\
\end{bmatrix}$$

Then $\abs{\overline \H_2^\bullet}=\left(A^{n}\vec{e_6}\right)_6$.
Let $\lambda$ be the largest eigenvalue of $A$.
The desired value $c(G)$ can now be computed as $ \sqrt[4]{\lambda}.$
For this graph, it equals approximately $ \sqrt[4]{8.95242} > 1.729$.
Note here that the number is clearly at least $2^{k-1}$ and the second largest absolute value of an eigenvalue of $A$ was smaller than $2.$

\section{Lower bounds on $c_d$ and $\ct_d$ for small $d$}\label{sec:lowerbounds}

We can extend the construction of~\cref{sec:cubic_fam1} towards families yielding better lower bounds.
Let $M$ be a $d$-regular graph (of small order $n_0$) and $C=\{v_1, v_2, \ldots, v_{\ell}\}$ a cycle in $M$, of length $\ell$ (we will refer to this $M$ as a \textit{gadget}). Here $V(M)=\{v_1, v_2, \ldots, v_{n_0} \}.$ In~\cref{sec:cubic_fam1}, $M=K_4$ and $C=C_3.$
We will choose $M$ to be a Moore graph later and $C$ a cycle for which all vertices have the same degree in $M[V(C)]$ (even while this is not necessary for the general construction). Note that the choice of the cycle $C$ is not always unique.

Let $G$ be the graph on $k n_0$ vertices, with vertices $v_{i,j},$ $i \in [n_0], j \in [k].$
The adjacencies in $G$ are defined as follows:
\begin{itemize}
    \item $v_{i,j}$ and $v_{i', j}$ are adjacent whenever $v_iv_{i'} \in E(M) \backslash E(C),$
    \item $v_{i,j}$ and $v_{i,j+1}$ are adjacent for all $i \in [\ell]$ and $j \in [k-1]$,
    \item $v_{i, 1}$ and $v_{i-1,k}$ are adjacent for $i \in [\ell]$ where $v_{0,k}$ denotes $v_{\ell,k}.$
\end{itemize}

One can consider this as gluing $k$ copies of $M$ together, along the vertices on the cycle $C$. We will denote the final graph $G$ as $M^{\bullet k}.$ In~\cref{fig:bullet2}, this is illustrated for $M \in \{P_{5,2}, K_{4,4} \}$ and $k=2,$ where the cycle is a $C_6$ in both cases.

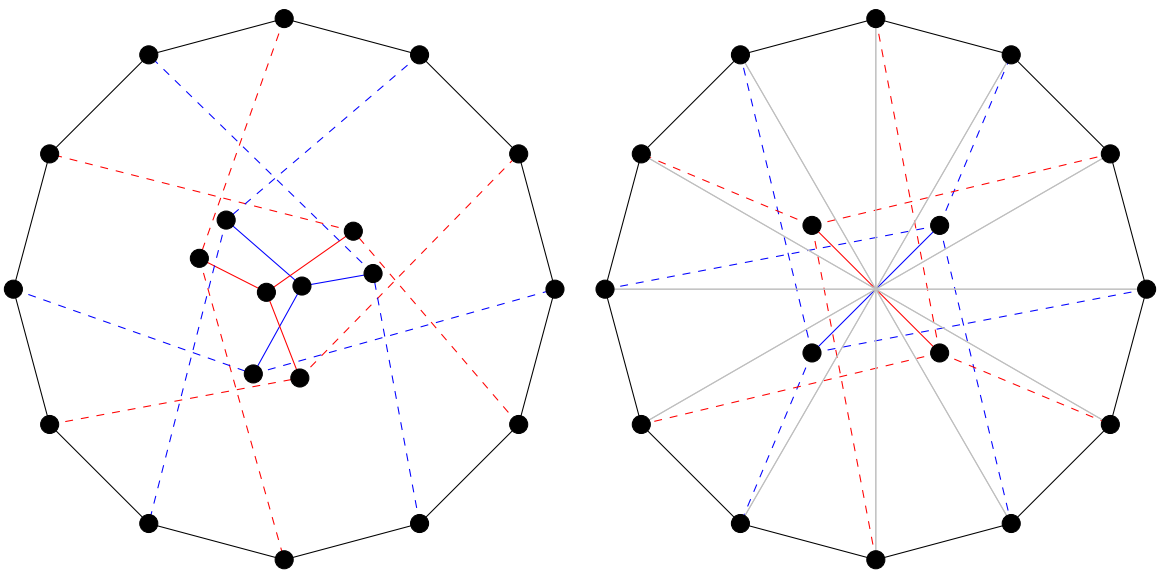
\begin{figure}[h]
\centering
\begin{tikzpicture}[scale=0.8]

\foreach \x in {0,2,4}{
\draw[blue] (\x*60+10:1.5) --(10:0.3);
}
\foreach \x in {1,3,5}{
\draw[red] (\x*60-20:1.5) --(190:0.3);
}

\foreach \x in {1,3,5}{
\draw[red, dashed] (60*\x-90:4.5)--(\x*60-20:1.5) --(60*\x+90:4.5);
}

\foreach \x in {0,2,4}{
\draw[blue, dashed] (60*\x-60:4.5)--(\x*60+10:1.5) --(60*\x+120:4.5);
}

\foreach \x in {0,1,...,11}{
\draw[fill] (\x*30:4.5) circle (0.15);
\draw (\x*30+30:4.5)--(\x*30:4.5);
}

\draw[fill] (10:0.3) circle (0.15);
\draw[fill] (190:0.3) circle (0.15);
\foreach \x in {0,2,4}{\draw[fill] (\x*60+10:1.5) circle (0.15);
}

\foreach \x in {1,3,5}{
\draw[fill] (\x*60-20:1.5) circle (0.15);
}

\end{tikzpicture}\quad
\begin{tikzpicture}[scale=0.8]

\draw[blue] (45:1.5) -- (225:1.5);
\draw[red] (90+45:1.5) -- (90+225:1.5);

\foreach \x in {0,2}{
\foreach \y in {0,1,2}{
\draw[blue, dashed] (\x*90+45:1.5) -- (\y*120+60+30*\x:4.5) ;
}
}

\foreach \x in {1,3}{
\foreach \y in {0,1,2}{
\draw[red, dashed] (\x*90+45:1.5) -- (\y*120+30*\x:4.5) ;
}
}

\foreach \x in {0,1,2,3}{\draw[fill] (\x*90+45:1.5) circle (0.15);
}

\foreach \x in {0,1,...,11}{
\draw[lightgray] (\x*30+180:4.5)--(\x*30:4.5);
}

\foreach \x in {0,1,...,11}{
\draw[fill] (\x*30:4.5) circle (0.15);
\draw (\x*30+30:4.5)--(\x*30:4.5);
}

\end{tikzpicture}
\caption{$P_{5,2}^{\bullet 2}$ and $K_{4,4}^{\bullet 2}$}\label{fig:bullet2}
\end{figure}

For $d=3,$ we consider $4$ families by choosing $M$ to be $K_4, K_{3,3},$ the Petersen graph $P_{5,2}$ and the Heawood graph $H_{3,6}.$

Similarly to Section~\ref{sec:cubic_fam1}, for the other $3$ families the eigenvalues of an associated coefficient matrix can be computed, and by taking the respective $6^{th}, 10^{th}$ or $14^{th}$ root of the largest eigenvalue (checking that the second largest eigenvalue cannot be dominant), we derive $c(G)$ for the families.
For $d \in \{4,5\},$ we focus on the family obtained from $M=K_{d,d}.$ 

As we will consider $N_{Dom}(G)$ as well, we first explain the determination of $\ct(G)$ for the family considered in~\cref{sec:cubic_fam1}, which is precisely $K_4^{\bullet k}.$

\subsection{Lower bound for the number of dominating connected sets in $K_4^{\bullet k}$}

In this subsection, we determine a lower bound for $\ct_3$ by estimating (i.e. determining up to a polynomial factor) $N_{dom}(G)$ for $G=K_4^{\bullet k}$ where $k \to \infty.$ The vertices can again be represented as cells in a $4 \times k$-board.
We have to consider four types of cells.
Cells can be occupied (both fixed and loose) as well as unoccupied (dominated or not dominated by cells in its column or the one left to it). Having four types of cells, there are more cases for the possible form of the column than was the case in Section~\ref{sec:cubic_fam1}.
It turns out there are $13$ of them up to isomorphism, and these are represented in~\cref{fig:13tiles}. 
Here permuting the first $3$ cells in a column is considered as an isomorphic column (but the counting is done for a fixed column).

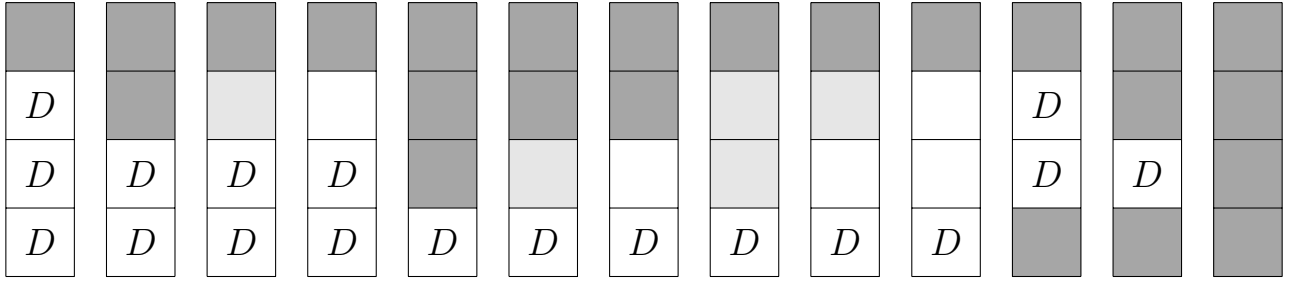
\begin{figure}[h]
    \centering
    \begin{tikzpicture}[scale=0.911]
                 \node at (0.5,0.5) {\large $D$};
          \node at (0.5,1.5) {\large $D$};
                   \node at (0.5,2.5) {\large $D$};
        \fill[gray!70!white] (0,3) rectangle (1,4);
        \draw[step=1cm] (0,0) grid (1,4);
    \end{tikzpicture}\quad
    \begin{tikzpicture}[scale=0.911]
        \fill[gray!70!white] (0,2) rectangle (1,4);
        \draw[step=1cm] (0,0) grid (1,4);
        \node at (0.5,0.5) {\large $D$};
        \node at (0.5,1.5) {\large $D$};
    \end{tikzpicture}\quad
    \begin{tikzpicture}[scale=0.911]
        \fill[gray!70!white] (0,3) rectangle (1,4);
        \fill[gray!20!white] (0,3) rectangle (1,2);
          \node at (0.5,0.5) {\large $D$};
          \node at (0.5,1.5) {\large $D$};
        \draw[step=1cm] (0,0) grid (1,4);
    \end{tikzpicture}\quad
    \begin{tikzpicture}[scale=0.911]
        \fill[gray!70!white] (0,3) rectangle (1,4);
        \draw[step=1cm] (0,0) grid (1,4);
         \node at (0.5,0.5) {\large $D$};
          \node at (0.5,1.5) {\large $D$};
    \end{tikzpicture}\quad
    \begin{tikzpicture}[scale=0.911]
        \fill[gray!70!white] (0,1) rectangle (1,4);
        \node at (0.5,0.5) {\large $D$};
        \draw[step=1cm] (0,0) grid (1,4);
    \end{tikzpicture}\quad
   \begin{tikzpicture}[scale=0.911]
        \fill[gray!70!white] (0,2) rectangle (1,4);
        \fill[gray!20!white] (0,1) rectangle (1,2);
          \node at (0.5,0.5) {\large $D$};
        \draw[step=1cm] (0,0) grid (1,4);
    \end{tikzpicture}\quad
    \begin{tikzpicture}[scale=0.911]
        \fill[gray!70!white] (0,2) rectangle (1,4);
        \draw[step=1cm] (0,0) grid (1,4);
        \node at (0.5,0.5) {\large $D$};
    \end{tikzpicture}\quad
    \begin{tikzpicture}[scale=0.911]
        \fill[gray!70!white] (0,3) rectangle (1,4);
        \fill[gray!20!white] (0,1) rectangle (1,3);
          \node at (0.5,0.5) {\large $D$};
        \draw[step=1cm] (0,0) grid (1,4);
    \end{tikzpicture}\quad
    \begin{tikzpicture}[scale=0.911]
        \fill[gray!70!white] (0,3) rectangle (1,4);
        \fill[gray!20!white] (0,3) rectangle (1,2);
          \node at (0.5,0.5) {\large $D$};
        \draw[step=1cm] (0,0) grid (1,4);
    \end{tikzpicture}\quad
    \begin{tikzpicture}[scale=0.911]
          \node at (0.5,0.5) {\large $D$};
        \fill[gray!70!white] (0,3) rectangle (1,4);
        \draw[step=1cm] (0,0) grid (1,4);
    \end{tikzpicture}\quad
     \begin{tikzpicture}[scale=0.911]
        \fill[gray!70!white] (0,3) rectangle (1,4);
         \fill[gray!70!white] (0,0) rectangle (1,1);
                  \node at (0.5,1.5) {\large $D$};
                   \node at (0.5,2.5) {\large $D$};
        \draw[step=1cm] (0,0) grid (1,4);
    \end{tikzpicture}\quad
    \begin{tikzpicture}[scale=0.911]
        \fill[gray!70!white] (0,2) rectangle (1,4);
        \fill[gray!70!white] (0,0) rectangle (1,1);
                 \node at (0.5,1.5) {\large $D$};
        \draw[step=1cm] (0,0) grid (1,4);
    \end{tikzpicture}\quad
    \begin{tikzpicture}[scale=0.911]
        \fill[gray!70!white] (0,0) rectangle (1,4);
        \draw[step=1cm] (0,0) grid (1,4);
    \end{tikzpicture}
    \caption{The $13$ types of columns necessary to estimate $N_{dom}(K_4^{\bullet k})$}
    \label{fig:13tiles}
\end{figure}

A tile corresponding to a connected dominating set, will contain at least one cell in every column.
Let $\H$ be the set of all such tiles.
Denote with $\overline \H^\bullet$ the (dominating) tiles which contain all cells of the first column and for which every other occupied cell is connected to the first column with a path going to the first column in the left direction.
Then $\abs \H \le 225 n \abs { \overline \H^\bullet} $ and as such it is sufficient to know the exponential growth of $\abs{\overline \H^\bullet}.$
We can do so by counting the number of such tiles restricted to the first $j$ columns and count them with a recursion, determined by a coefficient matrix.

The coefficient matrix is the following.
\setcounter{MaxMatrixCols}{20}
$$
A=
\begin{bmatrix}
    0 & 0 & 0 & 0 & 1 & 0 & 0 & 0 & 0 & 0 & 0 & 0 & 1 \\
    0 & 0 & 0 & 0 & 1 & 0 & 0 & 0 & 0 & 0 & 0 & 0 & 1 \\
    0 & 1 & 0 & 0 & 0 & 1 & 1 & 0 & 0 & 0 & 0 & 1 & 0 \\
    0 & 1 & 0 & 0 & 0 & 0 & 0 & 0 & 0 & 0 & 0 & 1 & 0 \\
    0 & 0 & 0 & 0 & 1 & 0 & 0 & 0 & 0 & 0 & 0 & 0 & 1 \\
    0 & 1 & 0 & 0 & 0 & 1 & 1 & 0 & 0 & 0 & 0 & 1 & 0 \\
    0 & 1 & 0 & 0 & 0 & 0 & 0 & 0 & 0 & 0 & 0 & 1 & 0 \\
    1 & 0 & 2 & 2 & 0 & 0 & 0 & 1 & 2 & 1 & 1 & 0 & 0 \\
    1 & 0 & 1 & 1 & 0 & 0 & 0 & 0 & 0 & 0 & 1 & 0 & 0 \\
    1 & 0 & 0 & 0 & 0 & 0 & 0 & 0 & 0 & 0 & 1 & 0 & 0 \\
    1 & 2 & 0 & 0 & 1 & 0 & 0 & 0 & 0 & 0 & 1 & 2 & 1 \\
    2 & 3 & 2 & 2 & 1 & 2 & 2 & 0 & 0 & 0 & 2 & 3 & 1 \\
    3 & 3 & 6 & 6 & 1 & 3 & 3 & 3 & 6 & 3 & 3 & 3 & 1 \\
\end{bmatrix}
$$
The largest eigenvalue of the coefficient matrix $A$ is now $\lambda \sim 8.29488091$.
All other eigenvalues satisfy $\abs{\lambda_i} < 2.31$, while there are clearly more than $4^k$ different dominating connected sets in $K_4^{\bullet k}.$
For the latter, observe that an arc with $k$ consecutive vertices, together with $k$ internal vertices, span a connected dominating set.
This implies that the main term is $\Theta( \lambda^k)$ and thus $\ct_3 \ge \sqrt[4]{\lambda}>1.697$.

\subsection{Better lower bounds for $c_d, \ct_d$ for $d \in \{3,4,5\}$}

We can compute a lower bound for $c_d$ and $\ct_d$ by considering the general construction $M^{\bullet k}.$
Similarly as for $K_4^{\bullet k},$ one can assign a $\abs{M} \cdot k$-board and consider the number of intensely connected tiles with the additional property that there is always at least one cell in the $j^{th}$ column connected to the first within the partial $\abs{M} \cdot j$-board.
Using a computer program, see~\cref{sec:app1}, the associated coefficient matrix $A$ is computed for $K_{d+1}^{\bullet k}, K_{d,d}^{\bullet k}$ , $ P_{5,2}^{\bullet k}$ and $H_{3,6}^{\bullet k}.$

The important cases have been summarized in~\cref{tab:overview_lb_rotatingM}.
Here $\lambda$ is the largest eigenvalue of $A$ (which is a positive real), and $\abs{\lambda}_2$ is the second largest modulus/ absolute value of all eigenvalues.

\begin{table}[h]
    \centering
    \begin{tabular}{|ccc| c c c| c c c | }
        \hline
         & & & \multicolumn{3}{|c|}{connected} & \multicolumn{3}{|c|}{dominating conn.} \\
        $d$& $M$ & $C$ & $\lambda$ & $\abs{\lambda}_2$ & $c_d \ge$ & $\lambda$ & $\abs{\lambda}_2$ & $\ct_d \ge$\\
        \hline
        3&$K_{3,3}$ & $C_4$ & 30.30  & 7.81 & 1.766 &  26.15 & 6.79 & 1.723\\
        3&$P_{5,2}$ & $C_6$ & 329.81 & 126.17 & 1.786 &  252.7 &  100.47 & 1.739 \\
        3&$H_{6,3}$ & $C_8$ & 3512.31 & 1145.53 & 1.792 &  2396.46 & 
858.38 & 1.743\\
        4&$K_{4,4}$ & $C_6$ & 167.97 & 29.86 & 1.897 &152.66&
31.33 &1.875\\
        5&$K_{5,5}$ & $C_8$ & 807.93 & 90.47 & 1.953 & 756.64&
102.54& 1.940\\
        \hline
    \end{tabular}
    \caption{Largest two eigenvalues for the matrix related to the family $M^{\bullet k}$}
    \label{tab:overview_lb_rotatingM}
\end{table}

We can construct a connected dominating set by taking a $k$-arc on the outer-cycle, together with the vertices in the inner part.
That is, the set $v_{i, j}$ for $\ell \le i \le n_0$ and $j \in [k]$, is connected and dominating.
Since we can extend this one arbitrarily, there are at least $2^{(\ell-1)k}$ many connected dominating sets.
When $G=K_{d,d}^{\bullet k},$ we have that $\ell=2(d-1)$ and we note that $\abs{\lambda}_2<2^{2d-1}$ for both the connected and dominating connected case.
This proves that the number of tiles in both cases indeed grows as $\Theta( \lambda^k).$
We stress here that the derived lower bounds on $c_d$ and $\ct_d$ for $M=K_{d,d}$ beat the best known lower bounds by~\cite{KKK18}, despite the much smaller base graph. This implies that the new construction is somewhat more efficient.

For $G=P_{5,2}^{\bullet k}$ and $G=H_{6,3}^{\bullet k},$ we cannot conclude as before.
A connected dominating set of order $k$, can dominate at most $k+2$ vertices, i.e. a connected dominating set needs to have at least $\frac n2 -1$ vertices. See e.g.~\cite{Duck02} for estimates for random cubic graphs on the size of a minimum connected dominating set.
As such, by considering all extensions of one possible connected dominating set, we cannot have more than $2^{n/2+1}$ of them and conclude.
We now end with a different proof that the lower bound for $c_d$ and $\ct_d$ will always equal $\sqrt[\abs M]{\lambda}.$

\begin{prop}
    For a fixed graph $M$, as $k$ tends to infinity, we have that for $G=M^{\bullet k}$, $N(G)=\Theta(\lambda^k)$ and $N_{dom}(G)=\Theta(\lambda^k)$, where $\lambda$ is the maximum eigenvalue of the respective coefficient matrix $A$.
\end{prop}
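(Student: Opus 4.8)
The plan is to argue that $N(G)$ and $N_{dom}(G)$ are each sandwiched between a fixed polynomial multiple of $\lvert \overline{\H}_2^\bullet \rvert$ (resp.\ $\lvert \overline{\H}^\bullet \rvert$) from above and a fixed power of $2$ from below, and that the vector of partial-tile counts evolves by a single application of the nonnegative integer matrix $A$ per column, so that the count of completed tiles is exactly an entry of $A^k$ applied to a fixed basis vector. The standard linear-algebra estimate for such a recursion then pins the growth rate to $\lambda$, the Perron eigenvalue of $A$, provided $\lambda$ is a simple dominant eigenvalue and the relevant coordinates of its eigenvector and of the initial/target vectors are nonzero.

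First I would set up the upper bound. As in~\cref{prop:afschattingenH1etc} and the paragraph following the $13$-column figure, for a general gadget $M$ the same four-step reduction applies: tiles with an empty column are bounded by $n(n-1)$ times tiles with all columns nonempty; filling in the first column costs a bounded factor $f(M)$; and orienting the ``leftmost wrongly-connected'' column as in the proof of~\cref{prop:afschattingenH1etc}(c) costs at most $f(M) \cdot n$. Hence $N(G) = \lvert \H_1 \rvert + \lvert \H_2 \rvert \le \mathrm{poly}(n)\cdot \lvert \overline{\H}_2^\bullet \rvert$, and for the dominating case $N_{dom}(G) = \lvert \H \rvert \le \mathrm{poly}(n)\cdot \lvert \overline{\H}^\bullet \rvert$ since every connected dominating tile already meets every column. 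Writing $v_j \in \Z^{m}_{\ge 0}$ for the vector whose entries are the counts of partial tiles on the first $j$ columns with each admissible rightmost-column configuration, the recursions displayed in the text say precisely $v_{j+1} = A v_j$ with $v_2 = \vec{e}$ a fixed $0/1$ vector, so $\lvert \overline{\H}_2^\bullet \rvert = (A^k \vec{e})_{i_0}$ for the coordinate $i_0$ indexing the ``all occupied and connected'' column (coordinate $6$ in~\cref{sec:cubic_fam1}). Since $A$ is a nonnegative integer matrix, $(A^k \vec{e})_{i_0} \le \lVert A^k \rVert_\infty \cdot \lVert \vec{e}\rVert_1 \le m \lVert \vec{e}\rVert_1\,\rho(A)^k \cdot \mathrm{poly}(k) = O(\lambda^k)$ using $\rho(A)=\lambda$ and that a matrix power grows at most polynomially times the spectral radius to the $k$; combined with the polynomial reduction factor this gives $N(G), N_{dom}(G) = O(\lambda^k)$.

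For the matching lower bound it suffices to produce $\Omega(\lambda^k)$ tiles directly. Here I would appeal to the Perron--Frobenius structure of $A$: $A$ arises from a connected automaton on column-configurations (one can pass, for each $j$, from the ``all cells of column $j$'' configuration to itself, and every configuration communicates with it), so $A$ is irreducible, $\lambda$ is a simple eigenvalue strictly dominating all others in modulus (the entries $\lvert\lambda\rvert_2 < \lambda$ reported in~\cref{tab:overview_lb_rotatingM} and in~\cref{sec:cubic_fam1} confirm this numerically for each concrete $M$), and its left and right Perron eigenvectors are strictly positive. Then $(A^k \vec{e})_{i_0} = c\,\lambda^k + o(\lambda^k)$ with $c = (w_{i_0})(u^\top \vec e)/(u^\top w) > 0$, where $u,w$ are the positive eigenvectors, giving $\lvert \overline{\H}_2^\bullet \rvert = \Theta(\lambda^k)$ and hence $N(G) = \Theta(\lambda^k)$; the same argument with the dominating coefficient matrix yields $N_{dom}(G) = \Theta(\lambda^k)$. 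Alternatively, for the specific families where an explicit ``arc plus inner part'' family of $2^{(\ell-1)k}$ connected dominating sets is available and $\lvert\lambda\rvert_2 < 2^{\ell-1} < \lambda$ (the $K_{d,d}$ cases), one gets the lower bound $N_{dom}(G) = \Omega(\lambda^k)$ without invoking irreducibility, since any exponential-in-$k$ lower bound together with the dominant-eigenvalue upper bound forces the rate to be exactly $\lambda$.

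The main obstacle is justifying that $\lambda$ genuinely \emph{strictly} dominates, i.e.\ that the contribution of $\lambda$ is not annihilated by the projection onto coordinate $i_0$ (equivalently that $A$ is not merely nonnegative but has the Perron--Frobenius positivity one needs). For the finitely many explicit matrices this is a finite check already recorded as $\lvert\lambda\rvert_2 < \lambda$ together with a positivity check on the relevant eigenvector coordinate, but for a \emph{general} $M$ one must argue structurally that the column-configuration transition graph is strongly connected and aperiodic (or at least that coordinate $i_0$ and $\vec e$ are not orthogonal to the Perron eigenspace). I would handle this by exhibiting, for arbitrary $M$, a short explicit cycle in the configuration graph through the ``full column'' state (which forces irreducibility of the communicating class containing $i_0$) and by noting that a self-loop at that state forces aperiodicity, so the restriction of $A$ to that class is primitive; contributions from other classes only help the lower bound and are dominated in the upper bound by $\rho(A)^k\mathrm{poly}(k)$ regardless. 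This reduces the general statement to the two inequalities already proven, completing the proof that $N(G) = \Theta(\lambda^k)$ and $N_{dom}(G) = \Theta(\lambda^k)$.
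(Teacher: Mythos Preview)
Your proposal is correct and takes essentially the same approach as the paper. The paper's key step is a short Claim---that the all-occupied column can succeed any column-state and can be the second predecessor of any column-state, proved cell-type by cell-type---which is precisely the communication/primitivity property you sketch in your final paragraph; from it the paper picks a basis vector not orthogonal to the Perron eigenvector and uses the Claim to sandwich $\Theta(\lambda^{k-3})$ partial boards between full columns, a constructive variant of your Perron--Frobenius deduction.
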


\begin{proof}
    We start with a simple claim, telling us that every possible state (of a column) can appear anywhere (and that there is a nowhere-zero row in $A$).
    \begin{claim}\label{clm:fullcolumn_nearby}
        Let $c$ be any possible column.
        Then $c$ can be succeeded by the column with all cells occupied.
        Also, the second predecessor (two columns to the left) can be the column with all cells occupied.
    \end{claim}
    \begin{claimproof}
        The first part is trivial. An occupied (both loose and fixed) cell, as well as a dominating or non-dominating non-occupied cell, can be followed by a fixed occupied cell.
        For the second part, note that every fixed occupied and dominated non-occupied cell can be preceded by a fixed occupied (FO) cell.
        Loose occupied - and non-dominated non-occupied cells can be preceded by a dominated non-occupied (D) cell.
        Using this twice, we note that the column can be preceded by a column having only FO- and D-cells, and the latter can be preceded by a full FO-column.
    \end{claimproof}
    The eigenvector corresponding with the largest eigenvalue $\lambda$ of $A$ is not the zero vector, and as such there is at least one base state (column) / base vector $\vec v$ which is not orthogonal to it and thus $A^j \vec{v} = \Theta( \lambda ^j).$
    Starting from this column, we can construct $\Theta( \lambda ^{n-3})$ partial boards. Each of them can be extended to a full board / tile by adding a full FO-column and another column, due to~\cref{clm:fullcolumn_nearby}. 
    This implies that there are at least $\Theta( \lambda ^k)$ tiles of the desired form.
\end{proof}

\section{Small extremal cubic and quartic graphs}\label{sec:d34_smallorder}

Using two independent algorithms, we computationally determined $c_{3,g}(n)$, $\ct_{3,g}(n)$, $c_{4,g}(n)$ and $\ct_{4,g}(n)$ for small girths and small orders (see~\cref{tab:c_{3,g}(n)} for the 3-regular case and~\cref{tab:c_{4,g}(n)} in~\cref{sec:app2} for the 4-regular case). More details about the algorithms can be found in~\cref{sec:app1}.
For each case that we computed, the extremal graph was unique except for $\ct_{4,3}(12)$, which had two extremal graphs. All the extremal graphs 
can also be inspected in the database of interesting graphs from the \textit{House of Graphs}~\cite{HoG} by searching for the keywords ``graph with many connected * subgraphs''. The most interesting case is $g=3$. Since the number of 3- and 4-regular graphs grows rapidly as $n$ increases, looking at higher girths enables us to exhaustively investigate larger orders as well. Moreover, for each extremal graph we determined the girth and saw that this tends to increase as the order grows. This indeed agrees with what one could intuitively expect. Hence, it is likely that many of the extremal graphs that we found when restricting the search to higher girths are extremal for lower girths as well (perhaps even for $g=3$). The values of $c_{d,g}(n)$ shown in bold have Moore graphs as the extremal graphs. These computations provide further evidence for~\cref{conj:Mooregraphs_extremal} and give rise to the (conditional) upper bounds for $c_3$ and $c_4$ from~\cref{tab:overview_c3_c4}. 
Another important observation is that $\ct_{3,7}(30)$ and hence $\ct_3(30)$ is not obtained by the (Moore graph) Tutte–Coxeter graph, $H_{3,8}$.
Except for small orders, the extremal graphs for $N$ and $N_{dom}$ seem to be different.

\begin{table}[h]
    \centering
    \begin{tabular}{|c| c | c c | c c |}
\hline
$n$ & $g$ & $c_{3,g}(n)$ & \text{g. ex.} & $\ct_{3,g}(n)$ & \text{g. ex.} \\
\hline
4 & 3 & \textbf{1.9680} & 3 & 1.9680 & 3 \\
6 & 3 & \textbf{1.9501} & 4 & 1.9129 & 4 \\
8 & 3 & 1.9044 & 4 & 1.8358 & 4 \\
10 & 3 & \textbf{1.8855} & 5 & 1.8127 & 5 \\
12 & 3 & 1.8644 & 5 & 1.7957 & 5\\
14 & 3 & \textbf{1.8563} & 6 & 1.7860 & 6\\
16 & 3 & 1.8451 & 6 & 1.7779 & 5\\
18 & 3 & 1.8390 & 6 & 1.7734 & 6\\
20 & 3 & 1.8340 & 6 & 1.7703 & 5\\
22 & 3 & 1.8303 & 6 & 1.7676 & 6\\
24 & 6 & 1.8275 & 7 & 1.7656 & 6\\
26 & 6 & 1.8249 & 7 & 1.7641 & 6\\
28 & 7 & 1.8229 & 7 & 1.7622 & 7\\
30 & 7 & \textbf{1.8214} & 8 & 1.7616 & 7\\
\hline
\end{tabular}
    \caption{Summary of the computations of $c_{3,g}(n)$,  $\ct_{3,g}(n)$ (rounded up to $4$ decimals) and the girths of the extremal graphs (g. ex.)}
    \label{tab:c_{3,g}(n)}
\end{table}

Finally, we focus on the extremal graphs for the main case: $c_3(n)$, which is determined exactly for $n \le 22.$
We note that some of the extremal graphs are members of the infinite families described in~\cref{sec:cubic_fam1} and~\cref{sec:lowerbounds}, e.g. $K_4^{\bullet 4}$ and $K_{3,3}^{\bullet 2}$.
The extremal graphs for $n \le 16$ are the well-known graphs $K_4$, $K_{3,3}$ , the $8$-vertex Möbius ladder, the Petersen graph $GP(5,2)$, the twinplex, the Heawood graph and the Möbius-Kantor graph $GP(8,3)$.

For $18 \le n \le 22$, the graphs attaining $c_3(n)$ are not that famous. The extremal graphs for $n \in \{20, 22\}$ are depicted in Figure~\ref{fig:c_3(20,22)}. 

\begin{figure}[h]
\centering
\begin{tikzpicture}[scale=0.6]
\foreach \x in {0,1,...,13}{
\draw[fill] (\x*360/14:4.5) circle (0.15);
\draw (\x*360/14+360/14:4.5)--(\x*360/14:4.5);
}
\draw[fill] (1.25,0) circle (0.15);
\draw[fill] (-1,0) circle (0.15);
\draw (1.25,0)--(-1,0);
\draw[fill] (1.5,-1.5) circle (0.15);
\draw[fill] (-1.5,-1) circle (0.15);
\draw (1.25,0)--(-1,0);

\draw (1.25,0)--(-1,0);
\draw (0*360/14:4.5)--(1.25,0)--(6*360/14:4.5);
\draw (3*360/14:4.5)--(-1,0)--(-4*360/14:4.5);

\draw[blue] (1.5,-1.5)--(-1.5,-1);
\draw[blue] (1*360/14:4.5)--(1.5,-1.5)--(-6*360/14:4.5);
\draw[blue] (5*360/14:4.5)--(-1.5,-1)--(-3*360/14:4.5);

\draw[fill, orange] (2,-2) circle (0.15);
\foreach \x in {-1,4,9}{
\draw[fill, orange] (\x*360/14:4.5) circle (0.15);
\draw[orange, dashed] (2,-2)--(\x*360/14:4.5);
}

\draw[fill, red] (2,2) circle (0.15);
\foreach \x in {-2,2,7}{
\draw[fill, red] (\x*360/14:4.5) circle (0.15);
\draw[red, dashed] (2,2)--(\x*360/14:4.5);
}

\end{tikzpicture}
\quad
\begin{tikzpicture}[scale=0.6]

\draw[blue, dashed] (210:4.5)--(100:3)--(30:4.5);
\draw[blue, dashed] (300:4.5)--(160:3)--(120:4.5);

\draw[red, dashed] (180:4.5)--(220:3)--(330:4.5);
\draw[red, dashed] (240:4.5)--(280:3)--(90:4.5);

\draw[orange, dashed] (0:4.5)--(-20:3)--(150:4.5);
\draw[gray, dashed] (60:4.5)--(40:3)--(-90:4.5);

\foreach \x in {0,1,...,12}{
\draw[fill] (\x*360/12:4.5) circle (0.15);
\draw (\x*360/12+360/12:4.5)--(\x*360/12:4.5);
}
\draw[fill] (0,0) circle (0.15);
\foreach \x in {0,1,2}{
\draw[fill] (\x*120+10:1.5) circle (0.15);
\draw[fill] (\x*120+40:3) circle (0.15);
\draw[fill] (\x*120-20:3) circle (0.15);
\draw (0,0)--(\x*120+10:1.5);
\draw (120*\x+40:3)--(\x*120+10:1.5);
\draw (120*\x-20:3)--(\x*120+10:1.5);
}
\foreach \x in {0,1,...,12}{
\draw (\x*360/12+360/12:4.5)--(\x*360/12:4.5);
}
\foreach \x in {0,1,...,12}{
\draw[fill, gray] (\x*360/12:4.5) circle (0.15);
}
\foreach \x in {3,6,8,11}{
\draw[fill, red] (\x*360/12:4.5) circle (0.15);
}
\foreach \x in {1,4,7,10}{
\draw[fill, blue] (\x*360/12:4.5) circle (0.15);
}
\end{tikzpicture}
\caption{The extremal graphs for $c_3(20)$ and $c_3(22)$}\label{fig:c_3(20,22)}
\end{figure}
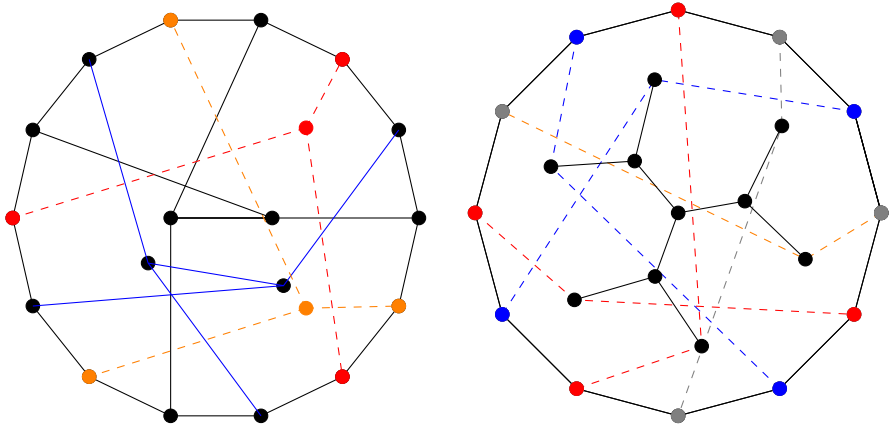

These graphs can be drawn in a nice way except for a few edges that break the apparent symmetry, indicating that a general pattern may not be possible to find.
Still there are surprising occurrences of well-known graphs;
among the graphs with girth at least $6$ and order $24$ and $26$, the McGee graph and $GP(13,5)$ turn out to be extremal. In~\cref{fig:D_{16,26}graph}, the generalized Petersen graphs $GP(8,3)$ and $GP(13,5)$ are presented.
Note that we expect that the extremal graph attaining $c_3(n)$ has large girth for large $n$ and the girth of a generalized Petersen graph is bounded by $8$.
It seems hard to predict the exact extremal graph for $c_3(n)$ for any large $n$, except for $n=126$, where we expect the Moore graph $H_{3,12}$ to be extremal. Cages (the smallest $d$-regular graphs with girth $g$) are natural candidates, but there are arguments not to believe all of them are extremal for $c_3(n)$. 
For example, as there are multiple cages for certain combinations of $(d,g)$, e.g. $(3, 10)$-cages.

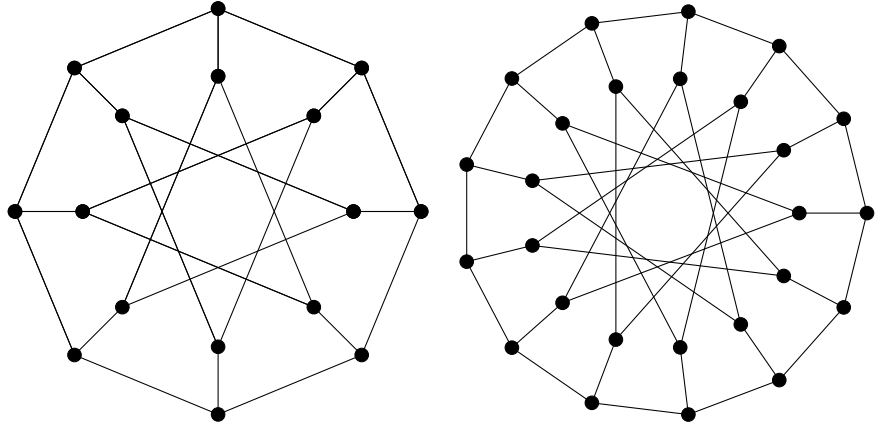
\begin{figure}[ht]
\centering
\begin{tikzpicture}[scale=0.6]
\foreach \x in {0,1,...,12}{
\draw[fill] (\x*360/8:4.5) circle (0.15);
\draw[fill] (\x*360/8:3) circle (0.15);
\draw (\x*360/8:3)--(\x*360/8:4.5);
\draw (\x*360/8+360/8:4.5)--(\x*360/8:4.5);
\draw (\x*360/8+1080/8:3)--(\x*360/8:3);
}
\end{tikzpicture}\quad
\begin{tikzpicture}[scale=0.6]
\foreach \x in {0,1,...,12}{
\draw[fill] (\x*360/13:4.5) circle (0.15);
\draw[fill] (\x*360/13:3) circle (0.15);
\draw (\x*360/13:3)--(\x*360/13:4.5);
\draw (\x*360/13+360/13:4.5)--(\x*360/13:4.5);
\draw (\x*360/13+1800/13:3)--(\x*360/13:3);
}
\end{tikzpicture}
\caption{The extremal graphs for $c_3(16)$ and $c_{3,6}(26)$}\label{fig:D_{16,26}graph}
\end{figure}

\section{Conclusion}\label{sec:conc}

In this paper, we studied the fundamental notion of the number of connected sets in a graph and the maximum, by means of investigating the behavior of $c_d(n)$.
From our results, we immediately conclude that $c_2(n)$ is monotone decreasing for (integral) $n \ge 3$, while this seems not to be true for $c_d(n)$ whenever $d>3$ by~\cref{prop:cd_notmono_d4} and~\cref{prop:nonmonotone}. This leaves the case $d=3$. Here the comparison of $c_3(124)$ and $c_3(126)$ may show non-monotonicity (the Tutte 12-cage has order 126).
Haslegrave~\cite{John22b} asked about the convergence of $c_d$ going to $2$, observing that $c_d> 2^{1-2/(d-1)}$.
Given expansion properties and the intuition that most non-connected vertex subsets have a small component, could it be possible that $c_d=2-\Theta(2^{-d})?$

We determined the extremal graphs for small orders with a computer search.
These extremal graphs and the conjectured Moore graphs are expander graphs (in particular $d$-connected), so as suggested by Vince~\cite{Vince20}, expansion plays a role.

For a lower bound on the asymptotic behavior, we constructed new families of graphs, where essentially copies of a small graph are glued together in a cyclic way. Calling the small graph our gadget, we note that our construction for the gadget $K_{d,d}$ already beats the previous best known lower bounds in~\cite{KKK18}. In the latter case they have aligned the gadgets in series, forming generalized ladder graphs.
To estimate $N(G)$, we computed the exponential behavior by finding the eigenvalues of a coefficient matrix of a related combinatorial counting problem.

In~\cite{Bjorklund12}, the authors refer to studying the number of connected sets $\mathcal C$, the number of dominating sets $\mathcal D$ and the number of dominating connected sets $\mathcal{C} \cap \mathcal{D}$.
A quick search shows that the behavior of the cubic graphs maximizing $\abs{ \mathcal C}$ and $\abs{ \mathcal D}$ are very different. 
In particular, the extremal graphs are connected in the first case (cf.~\cref{prop:extremal_is_connected}) and disconnected for the dominating case.
More precisely, if $d+1 \mid n,$ then it is an immediate corollary of the projection method / Shearer's lemma from~\cite{CGFS86} that the union of copies of $K_{d+1}$ maximizes the number of dominating sets, since every closed neighborhood cannot be empty, i.e. $\abs{ \mathcal D } \le \left(2^{d+1}-1\right)^{n/(d+1)}$.
It is thus also not surprising that the graphs maximizing $\abs{ \mathcal C \cap \mathcal D }$ behave differently. In particular, we observed that the cubic Moore graph of girth $8$, $H_{3,8}$, does not attain $\ct_3(30).$

Vince~\cite{Vince20} considered the number of connected sets in a graph as a measure for connectivity. As such, it is natural to compare it with the algebraic connectivity $\mu$ (the second smallest eigenvalue of the Laplacian matrix of the graph).
We remark that the graphs that maximize the algebraic connectivity and the graph that maximizes the number of connected sets, are not equal in general.
E.g. for $d=4$ and $n=7$, the complement of $C_7$ maximizes $\mu$, while the complement of $C_3 \cup C_4$ maximizes $N$ and $N_{dom}.$
For $d=3$ and $n=12$, the same graph maximizes $\mu$ and $N_{dom}$, but is different from the one maximizing $N$.
For $d=3$ and $n=16$, the same graph maximizes $\mu$ and $N$, but is different from the one maximizing $N_{dom}$.
For $d=4$ and $n=9$, there are four graphs maximizing $\mu.$ It is also clear that the number of connected sets and the algebraic connectivity are very different from a computational point a view, where the former seems much harder in general than the latter. We refer the interested reader to~\cite{Kolokolnikov15} bounds on algebraic connectivity of graphs subject to various constraints and to~\cite{deAbreu07,EKJS24} for overviews of algebraic connectivity.

We briefly elaborate on transient sets. A transient set $T \subset V(G)$ for a reference vertex $s$ and an end vertex $u$ has the property that for every vertex $v \not\in N(s) \cap N(u)$:
\begin{itemize}
    \item if $v \in T,$ then $v$ has at least two neigbhours in $T$,
    \item if $v \not \in T,$ then at least two neighbors of $v$ are not in $T$ either.
\end{itemize}
This notion is inspired by the travelling salesman problem, since the first vertices of a walk visiting all vertices (starting from $s$) needs to be a transient set.
While transient sets can be considered as less elementary, the number of transient sets gives a better upper bound for the time complexity of the variant of the dynamic programming solution by~\cite{Bellman62,HK62}, as observed in~\cite{Bjorklund12}.  
Studying the number of transient sets for small graphs has the additional complication that a reference vertex is taken and so one needs to be consistent in comparing, e.g. taking the average over the different vertices. As such, we left out the study for this variant.

This project also leaves some mathematical challenges, with~\cref{conj:maxgiven_nd} and~\cref{conj:Mooregraphs_extremal} as main conjectures with nice implications.

\section*{Acknowledgement}

Stijn Cambie and Jorik Jooken are supported by FWO grants with grant numbers 1225224N and 1222524N, respectively. The research of Jan Goedgebeur was supported by Internal Funds of KU Leuven and an FWO grant with number G0AGX24N. 

The computational resources and services used in this work were provided by the VSC (Flemish Supercomputer Center), funded by the Research Foundation - Flanders (FWO) and the Flemish Government – department EWI.

\paragraph{Open access statement.} For the purpose of open access,
a CC BY public copyright license is applied
to any Author Accepted Manuscript (AAM)
arising from this submission.

\bibliographystyle{abbrv}
\bibliography{ref}

\section*{Appendix}\label{sec: appendix}

\appendix

\section{Details of computer programs}\label{sec:app1}
\subsection*{Determining $c_{d,g}(n)$ and $\ct_{d,g}(n)$}
We used the graph generator GENREG~\cite{Me99} for generating all connected $d$-regular graphs on $n$ vertices with girth at least $g$. We implemented two independent algorithms for calculating $N(G)$, two independent algorithms for $N_{dom}(G)$ and used a computer cluster to do these computations for the generated graphs with all algorithms. These computations amounted to a total time of around 1 CPU-year. The results of all algorithms were compared and were in complete agreement with each other. The first algorithm for calculating $N(G)$ keeps track of a counter and increments this counter by iterating over all subsets $V' \subseteq V(G)$ and using a flood fill algorithm for checking whether the graph induced by $V'$ is connected. The first algorithm for calculating $N_{dom}(G)$ additionally checks for each vertex in $V(G) \setminus V'$ whether it has a neighbor in $V'$. The second algorithm for calculating $N(G)$ ($N_{dom}(G)$) recursively generates all connected (dominating) sets (the pseudocode is given in~\cref{algo:recursivelyCount}). The recursion starts from the set $V(G)$, which is connected (and dominating), and calculates all cut vertices $S \subset V(G)$ in linear time using Tarjan's algorithm~\cite{Ta72}. The algorithm then recursively removes a vertex from $V(G) \setminus S$ in all possible ways (such that the resulting set is still dominating in case of $N_{dom}(G)$). The resulting set is again connected and the enumeration process continues for each generated set until either the empty set is reached or the current set has been encountered before. The algorithm uses memoization with hashing to efficiently prevent the algorithm from double counting the same set.

\begin{algorithm}[ht!]
\caption{RecursivelyGenerate(Graph $G$, Vertex subset $V'$, Bool \textit{Dominating})}
\label{algo:recursivelyCount}
  \begin{algorithmic}[1]
            \STATE // This function recursively generates all vertex sets $V'$ such that the graph induced by $V'$ is connected.\\
            \STATE // If \textit{Dominating} is \textit{True}, it will only generate dominating connected induced subgraphs.
            \IF{The function was not called before with parameters $G$, $V'$ and \textit{Dominating}}
               \STATE Output $V'$ // The set $V'$ is connected (and dominating if \textit{Dominating} is \textit{True})
               \STATE Compute all cut vertices $S$ of the graph induced by $V'$
               \FOR{each vertex $u \in V' \setminus S$}
                    \IF{\textit{Dominating}}
                       \IF{Every neighbor of $u$ which is not in $V'$ has some neighbor in $V' \setminus \{u\}$}
                            \STATE RecursivelyGenerate($G$, $V' \setminus \{u\}$, \textit{Dominating})
                       \ENDIF
                    \ELSE
                       \STATE RecursivelyGenerate($G$, $V' \setminus \{u\}$, \textit{Dominating})
                    \ENDIF
		    \ENDFOR
            \ENDIF	  
  \end{algorithmic}
\end{algorithm}

\subsection*{Determining coefficient matrices: lower bounds for $c_d$ and $\ct_d$}

We also implemented two algorithms for constructing the coefficient matrices that express the recursion relations for the quantities involving tiles. The largest matrices considered for this paper, corresponding to the states of the gadgets from Section~\ref{sec:lowerbounds}, have several million entries. The two algorithms generate all states by assigning a type to each cell in all possible ways that represent a valid state (e.g. a fixed occupied cell cannot be adjacent to a loose occupied cell in the same column). The first algorithm merges states that are isomorphic to each other, whereas the second algorithm does not take symmetry into account. The algorithms then fill each entry of the matrix by checking whether the state corresponding to that row can be preceded by the state corresponding to that column. Finally, the algorithm computes the largest eigenvalues of the matrix and computes the constant for the lower bounds on $c_d$ (or $\ct_d$) by taking the $n$-th root, where $n$ is the order of the gadget.

The code and data related to this paper have been made publicly available at~\cite{CGJ23}.

\section{Table summarizing the computations of $c_{4,g}(n)$ and $\ct_{4,g}(n)$}\label{sec:app2}

\begin{table}[h]
    \centering
    \begin{tabular}{|c| c | c c | c c |}
\hline
$n$ & $g$ & $c_{4,g}(n)$ & \text{g. ex.} & $\ct_{4,g}(n)$ & \text{g. ex.} \\
\hline
5 & 3 & \textbf{1.9873} & 3 & 1.9873 & 3\\
6 & 3 & 1.9786 & 3 & 1.9442 & 3\\
7 & 3 & 1.9672 & 3 & 1.9442 & 3\\
8 & 3 & \textbf{1.9766} & 4 & 1.9680 & 4\\
9 & 3 & 1.9590 & 3& 1.9320 & 3\\
10 & 3 & 1.9603 & 4& 1.9470 & 4\\
11 & 3 & 1.9515 & 4& 1.9256 & 4\\
12 & 3 & 1.9485 & 4& 1.9301 & 4\\
13 & 3 & 1.9459 & 4& 1.9196 & 4\\
14 & 3 & 1.9433 & 4& 1.9204 & 4\\
15 & 3 & 1.9412 & 4& 1.9161 & 4\\
16 & 3 & 1.9406 & 4& 1.9134 & 4\\
17 & 3 & 1.9383 & 4& 1.9111 & 4\\
19 & 5 & 1.9369 & 5& 1.9001 & 5\\
20 & 5 & 1.9358 & 5&  1.8996 & 5\\
21 & 5 & 1.9349 & 5& 1.8988 & 5\\
22 & 5 & 1.9342 & 5& 1.8988 & 5\\
23 & 5 & 1.9335 & 5&  1.8984 & 5\\
24 & 5 & 1.9330 & 5& 1.8982 & 5\\
26 & 6 & \textbf{1.9321} & 6&  1.8945 & 6\\
28 & 6 & 1.9314 & 6&  1.8942 & 6\\
\hline
\end{tabular}
    \caption{Summary of the computations of $c_{4,g}(n)$,  $\ct_{4,g}(n)$ (rounded up to $4$ decimals) and the girths of the extremal graphs (g. ex.)}
    \label{tab:c_{4,g}(n)}
\end{table}

\end{document}